\newtheorem{theorem}{Theorem}
\newtheorem{proposition}{Proposition}
\newtheorem{corollary}{Corollary}
\newtheorem{lemma}{Lemma}
\theoremstyle{definition}
\newtheorem{definition}{Definition}    
\newtheorem{remark}{Remark}
\newtheorem{example}{Example}
\renewcommand{\SS}{\mathbb S}
\newcommand{\BB}{\mathbb B}
\newcommand{\CC}{\mathbb C}
\newcommand{\RR}{\mathbb R}
\newcommand{\HH}{\mathcal{H}}
\newcommand{\VV}{\mathcal{V}}
\newcommand{\grad}{\mathrm{grad}}
\newcommand{\spec}{\mathrm{spec}}
\newcommand{\Span}{\mathrm{span}}
\newcommand{\rank}{\mathrm{rank}}
\newcommand{\Div}{\mathrm{div}}
\newcommand{\ov}{\overline}
\newcommand{\abs}[1]{\vert #1\vert}
\newcommand{\norm}[1]{\vert #1\vert} 
\begin{document}  

\title{Eigenvalues of harmonic almost submersions}

\author{E. Loubeau}
\address{D{\'e}partement de Math{\'e}matiques, Universit{\'e} de Bretagne Occidentale, Brest, France}
\email{loubeau@univ-brest.fr}

\author{R. Slobodeanu}
\address{Faculty of Physics, Bucharest University, Romania}
\email{radualexandru.slobodeanu@g.unibuc.ro}

\thanks{The second author benefited from a one-year grant from the Conseil G{\'e}n{\'e}ral du Finist{\`e}re and acknowledges partial support by the CEx
grant no. 2-CEx 06-11-22/ 25.07.2006.}

\subjclass[2000]{Primary 58E20; Secondary 53C43}


\keywords{Harmonic morphisms, pseudo horizontally weakly conformal maps}

\begin{abstract}
Maps between Riemannian manifolds which are submersions on a dense subset, are studied by means of the eigenvalues of the pull-back of the target metrics, the first fundamental form. Expressions for the derivatives of these eigenvalues yield characterizations of harmonicity, totally geodesic maps and biconformal changes of metric preserving harmonicity. A Schwarz lemma for pseudo harmonic morphisms is proved, using the dilatation of the eigenvalues and, in dimension five, a Bochner technique method, involving the Laplacian of the difference of the eigenvalues, gives conditions forcing pseudo harmonic morphisms to be harmonic morphisms.
\end{abstract}

\maketitle

\section{Introduction}

The geometric features of harmonic maps are often best revealed when combined with other properties such as conformality, which has led to a connection with minimal submanifolds and, more generally, minimal branched immersions. The dimensional counterpart of this approach emerged from a seemingly unrelated problem on maps which preserve, by composition on the right-hand side, (local) harmonic functions. 
These maps, called harmonic morphisms, were characterized by Fulgede~\cite{Fuglede} and Ishihara~\cite{Ishihara} as harmonic maps with the added property of  horizontal weak conformality, i.e. the map $\varphi : (M,g) \to (N,h)$ between Riemannian manifolds, is horizontally weakly conformal if at any point $x\in M$, either $d\varphi_{x} \equiv 0$ or $d\varphi_{x}\big|_{(\ker d\varphi_{x})^{\perp}}$ is surjective and conformal. The conformal factor $\lambda(x)$ is called the dilation of $\varphi$ and harmonic morphisms must be almost submersions, i.e. submersive on a dense set. The theory of harmonic morphisms has developed into a rich subject, cf.~\cite{BW}, and one particular interesting area is its interaction with complex structures and links with holomorphic maps.
In general, horizontally weakly conformal maps pull back the metric of the target onto a metric on the horizontal distribution (the orthogonal complement of the kernel of the differential) but in the presence of a complex structure on the codomain, one can also pull back this geometric structure. However, this property is not characteristic of horizontally weakly conformal maps but defines a larger class called pseudo horizontally weakly conformal maps, first identified by Burns, Burstall, de Bartolomeis and Rawnsley in \cite{BBBR} in their study of stable harmonic maps into irreducible Hermitian symmetric spaces of compact type and formulated as
$$[d\varphi \circ (d\varphi)^t , J]=0,$$
where $(d\varphi)^t$ is the adjoint of the $d\varphi$, and $J$ is the complex structure on the target.
A more systematic study of pseudo horizontally weakly conformal maps into almost Hermitian manifolds was carried out in \cite{Loubeau} and \cite{LM} where the pull-back of the complex structure $J$ is used to build an $f$-structure on the domain $(M,g)$, i.e. an endomorphism $F$ of the tangent bundle satisfying
$$ F^3 + F =0,$$
associated to each pseudo horizontally weakly conformal map. Moreover, these maps are $(F,J)$-holomorphic, i.e. their differentials intertwine the $f$-structure of the domain and complex structure of the target. When the codomain is $(1,2)$-symplectic, that is the K\"ahler form $\omega$ satisfies $(d\omega)^{1,2} =0$, the harmonicity of a pseudo horizontally weakly conformal map is characterized by
$$F\Div F =0.$$
Harmonic pseudo horizontally weakly conformal maps are called pseudo harmonic morphisms and one can easily show that they pull back (local) holomorphic functions onto harmonic functions.

A subclass of pseudo horizontally weakly conformal maps, called pseudo horizontally homothetic maps and defined by
$$d\varphi_{x}((\nabla_{v}d\varphi^{t}(JY))_{x}) = J_{\varphi(x)}d\varphi_{x}((\nabla_{v}d\varphi^{t}(Y))_{x}) ,$$
was introduced by Aprodu, Aprodu and Brinzanescu~\cite{AAB} to construct new maps preserving minimal submanifolds. Recently, in a sort of return to the origins, harmonic pseudo horizontally homothetic submersions were shown to be weakly stable~\cite{Aprodu}. While pseudo horizontal weak conformality is a compatibility condition of the complex structure and the metric, pseudo horizontal homothety is a partial K\"ahler condition on the horizontal bundle.

The principal difference between the conditions of horizontally weakly conformal and pseudo horizontally weakly conformal is that the latter does not connect the domain and target metrics together. This extra flexibility, for example, non-constant immersions can be pseudo horizontally weakly conformal, is also the source of new difficulties when trying to put in evidence geometric properties. To avoid these limitations, this article takes the option of concentrating on a fundamental constituent of a map: the eigenvalues of its first fundamental form, which is the pull-back of the target metric.
Computing their derivatives, we are able to give an expression of harmonicity, a characterization of totally geodesic submersions and a criterion of invariance under biconformal change of metric. When looking at the distribution of the eigenvalues, the filiation with holomorphic maps appears clearly and we can obtain a Schwarz lemma, that is geometric conditions on the domain and codomain restricting the growth, or even the existence, of pseudo harmonic morphisms. The approach is essentially based on~\cite{GH} but is also reminiscent of~\cite{Yau}. Finally, working from a five dimensional Riemannian manifold into a four dimensional Hermitian manifold, we observe that the first fundamental form of pseudo harmonic morphisms can only admit two non-zero eigenvalues and derive a formula for the Laplacian of their difference. This enables us to find conditions on the spaces and the map forcing these two eigenvalues to agree and the map to be a harmonic morphism.

\section{Derivatives of eigenvalues}

Let $\varphi: (M^{m}, g) \to (N^{n}, h)$ be a smooth map between Riemannian manifolds. 
The first fundamental form $\varphi^{*}h$ is a symmetric, positive semi-definite covariant $2$-tensor field on $M$ 
defined by the pull-back of the metric on $N$.
Via the musical isomorphisms, it can be seen as the endomorphism 
$d \varphi^{t} \circ d \varphi: TM \to TM$, where $d \varphi^{t}:TN \to TM$ is the adjoint
of $d \varphi$.

Since it is symmetric, the first fundamental form is diagonalizable and we let $\lambda_{1}^{2} \geq \cdots \geq \lambda_{r}^{2} \geq \lambda_{r+1}^{2}=\cdots=\lambda_{m}^{2}=0$
be its (real, non-negative) eigenvalues, where $r = \rank (d \varphi)$. 

A non-constant map is horizontally weakly conformal when all the non-zero eigenvalues of $\varphi^{*}h$ are equal.

A vector $X \in T_{x}M$ will be called an eigenvector of $\varphi^{*}h_{x}$ if
$$\varphi^{*}h_{x}(X, Y)=\lambda^{2}g_{x}(X,Y), \quad  \forall Y \in T_{x}M ,$$
or equivalently
$$d \varphi^{t}_{x} \circ d \varphi_{x}(X)=\lambda^{2}X.$$

At any point of $M$, there exists an orthonormal basis of eigenvectors $\{ E_{i} \}_{i=1,\dots,m}$ and, around any point of a dense open subset of $M$,
we have a local orthonormal frame of eigenvector fields (\cite{PW}).

In particular, for eigenvector fields we have
$$\varphi^{*}h(E_{i}, E_{j})=\delta_{ij}\lambda_{i}^{2},$$ 
so the vectors $ d \varphi(E_{i})$ ($i=1,\dots,m$) are orthogonal of norm $|d \varphi(E_{i})|=\lambda_{i}$ and, 
decomposing $d \varphi(E_{i})=\sum_{a=1}^{n}\varphi_{i}^{a}Y_{a}$ in an orthonormal frame $\{ Y_{a} \}_{a=1,\dots,n}$
in $TN$, we have
$$
\lambda_{i}^{2}=h(d \varphi(E_{i}), d \varphi(E_{i}))=\sum_{a=1}^{n}\varphi_{i}^{a}\varphi_{i}^{b}h_{ab}=\sum_{a=1}^{n}(\varphi_{i}^{a})^{2}.
$$

\begin{lemma}
Let $\varphi: (M^{m}, g) \to (N^{n}, h)$ be a smooth map, $\{\lambda_{i}\}_{i=1,\dots,m}$ its eigenvalues and $\{E_{i} \}_{i=1,\dots,m}$ an 
orthonormal frame of eigenvectors of $\varphi^{*}h$. Then the derivative of $\lambda_{i}^2$ in the direction of $E_{k}$ is
\begin{equation}\label{dlambda}
E_{k}(\lambda_{i}^{2})= 2h(\nabla d \varphi (E_{k},E_{i}), d \varphi (E_{i})), 
\end{equation}
for all $i,k= 1,\dots,m$, or alternatively
\begin{equation}\label{dlambdat}
E_{k}(\lambda_{i}^{2}) = -2h( d \varphi (E_{k}), \nabla d \varphi (E_{i}, E_{i}))+
2(\lambda_{i}^{2}-\lambda_{k}^{2})g(E_{k},\nabla^{M}_{E_{i}} E_{i}), 
\end{equation}
for all $i,k= 1,\dots,m$, with $i\neq k$.
\end{lemma}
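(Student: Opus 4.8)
The plan is to compute everything directly from the definition $\lambda_i^2 = h(d\varphi(E_i), d\varphi(E_i))$, using only two structural facts: the pull-back connection on $\varphi^{-1}TN$ (the one appearing in the second fundamental form $\nabla d\varphi(X,Y) = \nabla_X(d\varphi(Y)) - d\varphi(\nabla^M_X Y)$, which I will denote $\nabla$) is compatible with $h$, and the eigenframe $\{E_i\}$ is $g$-orthonormal, so that differentiating $g(E_i,E_i)\equiv 1$ and $g(E_k,E_i)\equiv 0$ yields the identities $g(\nabla^M_{E_j}E_i, E_i) = 0$ and $g(\nabla^M_{E_j}E_k, E_i) + g(E_k, \nabla^M_{E_j}E_i) = 0$.

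For \eqref{dlambda} I would differentiate $\lambda_i^2 = h(d\varphi(E_i), d\varphi(E_i))$ along $E_k$. Metric compatibility gives $E_k(\lambda_i^2) = 2\, h(\nabla_{E_k}(d\varphi(E_i)), d\varphi(E_i))$, and the definition of the second fundamental form lets me write $\nabla_{E_k}(d\varphi(E_i)) = \nabla d\varphi(E_k, E_i) + d\varphi(\nabla^M_{E_k} E_i)$. It then remains to show that the second summand contributes nothing: expanding $\nabla^M_{E_k} E_i$ in the eigenframe and using $h(d\varphi(E_j), d\varphi(E_i)) = \delta_{ji}\lambda_i^2$, this contribution equals $\lambda_i^2\, g(\nabla^M_{E_k}E_i, E_i)$, which vanishes since $|E_i|^2 \equiv 1$. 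This gives \eqref{dlambda}.

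For \eqref{dlambdat} I would instead differentiate the orthogonality relation $h(d\varphi(E_k), d\varphi(E_i)) = 0$ (valid because $i\neq k$) along $E_i$. Compatibility splits this into $h(\nabla_{E_i}(d\varphi(E_k)), d\varphi(E_i)) + h(d\varphi(E_k), \nabla_{E_i}(d\varphi(E_i))) = 0$, and I would rewrite each $\nabla_{E_i}(d\varphi(\cdot))$ as $\nabla d\varphi$ plus a connection term. The symmetry $\nabla d\varphi(E_i, E_k) = \nabla d\varphi(E_k, E_i)$ together with \eqref{dlambda} identifies $h(\nabla d\varphi(E_i, E_k), d\varphi(E_i))$ with $\tfrac12 E_k(\lambda_i^2)$, while $h(d\varphi(E_k), \nabla d\varphi(E_i, E_i))$ is exactly the term appearing in the claim. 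Expanding the two remaining connection terms in the eigenframe produces $\lambda_i^2\, g(\nabla^M_{E_i}E_k, E_i)$ and $\lambda_k^2\, g(E_k, \nabla^M_{E_i}E_i)$.

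The decisive algebraic step is then to merge these two coefficients: the second orthonormality identity gives $g(\nabla^M_{E_i}E_k, E_i) = -g(E_k, \nabla^M_{E_i}E_i)$, so the two connection contributions collapse to a single multiple of $(\lambda_i^2 - \lambda_k^2)\, g(E_k, \nabla^M_{E_i}E_i)$. Transferring it to the right-hand side, solving for $E_k(\lambda_i^2)$, and multiplying by $2$ yields \eqref{dlambdat}. I expect the only delicate point to be the careful bookkeeping of the several covariant-derivative terms and the consistent, sign-correct use of the two orthonormality identities; no deeper obstacle should arise, since the symmetry of $\nabla d\varphi$ and metric compatibility of the pull-back connection carry out all the essential work.
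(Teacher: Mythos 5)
Your proposal is correct and follows essentially the same route as the paper: identity \eqref{dlambda} is obtained exactly as in the text (metric compatibility of the pull-back connection plus $g(\nabla^{M}_{E_k}E_i,E_i)=0$), and for \eqref{dlambdat} you merely reorganize the paper's computation, differentiating the orthogonality $h(d\varphi(E_k),d\varphi(E_i))=0$ along $E_i$ and invoking the symmetry of $\nabla d\varphi$, whereas the paper starts from $E_k(\lambda_i^2)$ and swaps derivatives via $[E_k,E_i]$ before integrating by parts against the same orthogonality relation. The ingredients and bookkeeping are identical, so no further comment is needed.
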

\begin{proof}
Let $\lambda_{i}$ be the i-th eigenvalue of $\varphi^{*}h$ and $E_{k}$ an eigenvector corresponding to the eigenvalue $\lambda_{k}$.
\begin{align*}
E_{k}(\lambda_{i}^{2})&=E_{k}(\varphi^{*}h(E_{i}, E_{i}))\\
&=E_{k}\left(h(d \varphi (E_{i}), d \varphi (E_{i}))\right)\\
&=2h(\nabla^{\varphi} _{E_{k}} d \varphi (E_{i}), d \varphi (E_{i}))\\
&=2h(\nabla d \varphi (E_{k},E_{i})+ d \varphi \left(\nabla^{M}_{E_{k}} E_{i}\right), d \varphi (E_{i}))\\
&=2h(\nabla d \varphi (E_{k},E_{i}), d \varphi (E_{i}))+ 2\lambda_{i}^{2}g(\nabla^{M}_{E_{k}} E_{i}, E_{i})\\
&=2h(\nabla d \varphi (E_{k},E_{i}), d \varphi (E_{i})), 
\end{align*}
for all $i,k= 1,\dots,m$.\\
A different expression of the derivatives of $\lambda_{i}$ is given by
\begin{align*}
E_{k}(\lambda_{i}^{2})&=E_{k}(\varphi^{*}h(E_{i}, E_{i}))\\
&=E_{k}\left(h(d \varphi (E_{i}), d \varphi (E_{i}))\right)\\
&=2h(\nabla^{\varphi} _{E_{k}} d \varphi (E_{i}), d \varphi (E_{i}))\\
&=2h(\nabla^{\varphi} _{E_{i}} d \varphi (E_{k})+d \varphi([E_{k},E_{i}]), d \varphi (E_{i}))\\
&=-2h( d \varphi (E_{k}), \nabla^{\varphi} _{E_{i}} d \varphi (E_{i}))+2\lambda_{i}^{2}g([E_{k},E_{i}], E_{i})\\
&=-2h( d \varphi (E_{k}), \nabla d \varphi (E_{i}, E_{i})+d \varphi(\nabla^{M}_{E_{i}} E_{i}))+
2\lambda_{i}^{2}g(E_{k},\nabla^{M}_{E_{i}} E_{i})\\
&=-2h( d \varphi (E_{k}), \nabla d \varphi (E_{i}, E_{i}))+
2(\lambda_{i}^{2}-\lambda_{k}^{2})g(E_{k},\nabla^{M}_{E_{i}} E_{i}), 
\end{align*}
for all $i,k= 1,\dots,m ,\, i\neq k$.
\end{proof}

As mentioned in \cite{Lemaire} for holomorphic maps, eigenvalues of pseudo horizontally weakly conformal maps come in couples.

\begin{proposition}
Let $\varphi: (M^{m}, g) \to (N^{2n}, J, h)$ be a pseudo horizontally weakly conformal map of rank $2r$ into an almost Hermitian manifold and F its associated metric f-structure induced on M (cf.~\cite{Loubeau}).\\ 
Then $\varphi^{*}h$ is F-invariant and $\varphi$ is $(F,J)$-holomorphic.\\
If $X \in \Gamma(TM)$ is an eigenvector of $\varphi^{*}h$ with respect to $g$, then so is $FX$, for the same eigenvalue.\\
Moreover, let $\spec_{g}(\varphi^{*}h)=\{\lambda_{1}^{2}, ..., \lambda_{r}^{2}, 0 \}$, around any point of M there exists an adapted orthonormal frame $\{E_{i}, F E_{i}, E_{\alpha}\}_{i=1,\dots,r}^{\alpha=1,\dots,m-2r}$ of eigenvectors of $\varphi^{*}h$ such that
the eigenspace of $\lambda_{i}^{2}$ is $\Span \{ E_{i}, FE_{i} \}$ and the eigenspace of the zero eigenvalue is $\Span \{ E_{\alpha} \}_{\alpha=1,\dots,m-2r}$.
\end{proposition}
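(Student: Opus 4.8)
The plan is to reduce everything to a single intertwining relation and its adjoint. Recall from the construction of the associated $f$-structure (cf.~\cite{Loubeau}) that $F$ is defined by $F=0$ on $\ker d\varphi$ and, on the horizontal distribution $\HH=(\ker d\varphi)^{\perp}$, by $d\varphi\circ F = J\circ d\varphi$; this is well defined precisely because the pseudo horizontal weak conformality $[d\varphi\circ(d\varphi)^{t},J]=0$ forces $J$ to preserve the image $d\varphi(\HH)$. In particular $(F,J)$-holomorphicity, $d\varphi\circ F=J\circ d\varphi$, is built into the construction, and $F$ is a metric $f$-structure, so it is skew-adjoint, $g(FX,Y)=-g(X,FY)$, with $\ker F=\ker d\varphi$ and $F^{2}=-\mathrm{id}$ on $\HH$. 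The $F$-invariance of $\varphi^{*}h$ then follows at once: since $J$ is an isometry of $h$, for all $X,Y\in\Gamma(TM)$ one has
$$\varphi^{*}h(FX,FY)=h(d\varphi(FX),d\varphi(FY))=h(Jd\varphi(X),Jd\varphi(Y))=h(d\varphi(X),d\varphi(Y))=\varphi^{*}h(X,Y),$$
the kernel components contributing nothing to either side.

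The key algebraic step is to commute $F$ with the first fundamental form. Taking the adjoint of $d\varphi\circ F=J\circ d\varphi$ and using $F^{t}=-F$ together with $J^{t}=-J$ gives $F\circ d\varphi^{t}=d\varphi^{t}\circ J$. Writing $S=d\varphi^{t}\circ d\varphi$ for the endomorphism representing $\varphi^{*}h$, these two relations yield
$$SF=d\varphi^{t}(d\varphi\,F)=d\varphi^{t}Jd\varphi=(F\,d\varphi^{t})d\varphi=FS,$$
so $S$ and $F$ commute. Hence every eigenspace of $S$ is $F$-invariant; in particular, if $SX=\lambda^{2}X$ then $S(FX)=F(SX)=\lambda^{2}(FX)$, which is exactly the statement that $FX$ is an eigenvector of $\varphi^{*}h$ for the same eigenvalue $\lambda^{2}$.

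To assemble the adapted frame I work fibrewise. Since $S$ is symmetric, $\ker S=\ker d\varphi$ and the nonzero eigenspaces lie in $\HH$, where $F^{2}=-\mathrm{id}$; thus $F$ restricts to a complex structure on each nonzero eigenspace $V_{\lambda_{i}^{2}}$, forcing it to be even-dimensional. Choosing a unit eigenvector $E_{i}\in V_{\lambda_{i}^{2}}$, the vector $FE_{i}$ is again an eigenvector for $\lambda_{i}^{2}$, orthogonal to $E_{i}$ (skew-adjointness gives $g(FE_{i},E_{i})=0$) and of unit norm (as $g(FE_{i},FE_{i})=g(E_{i},E_{i})$ on $\HH$), so $\{E_{i},FE_{i}\}$ is an orthonormal basis of the $2$-dimensional eigenspace $\Span\{E_{i},FE_{i}\}$. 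On $\ker S=\ker d\varphi=\ker F$ I pick any orthonormal basis $\{E_{\alpha}\}_{\alpha=1,\dots,m-2r}$. Eigenvectors of the symmetric operator $S$ attached to distinct eigenvalues are automatically orthogonal, so the resulting frame $\{E_{i},FE_{i},E_{\alpha}\}$ is orthonormal, and by~\cite{PW} it may be taken smooth around any point of a dense open subset of $M$.

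The crux of the argument is the commutation $SF=FS$, which depends only on correctly transcribing the intertwining relation and on the skew-adjointness of $F$ and $J$; once it is established, the eigenvalue-pairing and the frame construction are routine linear algebra. The point deserving care is the claim that the nonzero eigenspaces sit inside $\HH$, where $F$ squares to $-\mathrm{id}$ — this uses $\ker S=\ker d\varphi=\ker F$ and $\mathrm{Im}\,F=(\ker F)^{\perp}=\HH$ — together with the implicit genericity assumption that the $\lambda_{i}^{2}$ be distinct, so that each eigenspace is exactly two-dimensional; should two of them coincide one obtains only an even-dimensional eigenspace and must group the $F$-pairs accordingly.
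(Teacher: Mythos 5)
Your proof is correct and follows essentially the same route as the paper: the paper's identity $\varphi^{*}h(FX,Y)=-\varphi^{*}h(X,FY)$, combined with skew-adjointness of $F$, is exactly your operator commutation $SF=FS$ for $S=d\varphi^{t}\circ d\varphi$, and both arguments then build the adapted frame by standard diagonalization using the $F$-invariance of eigenspaces. Your remark about repeated eigenvalues (grouping $F$-pairs when an eigenspace has dimension greater than two) is a worthwhile precision that the paper leaves implicit.
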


\begin{proof}
First of all, observe that
\begin{align*}
\varphi^{*}h(FX, Y)&=h(d\varphi(FX), d\varphi(Y))\\
&=h(Jd\varphi(X), d\varphi(Y))\\
&=-h(d\varphi(X), Jd\varphi(Y))\\
&= -h(d\varphi(X), d\varphi(FY))\\
&=-\varphi^{*}h(X, FY) .
\end{align*}
Moreover, supposing that 
$$\varphi^{*}h(X, Y)=\lambda^{2}g(X, Y), \quad \forall Y \in TM,$$ 
then
\begin{equation*}
\varphi^{*}h(FX, Y)=-\varphi^{*}h(X, FY)=-\lambda^{2}g(X, FY)=\lambda^{2}g(FX, Y).
\end{equation*}
To construct an adapted frame, we can use the standard diagonalization procedure once we have checked that, if $E_{1}, FE_{1}$ are orthonormal eigenvector fields associated to $\lambda_{1}^{2} \in \spec_{g}(\varphi^{*}h)$, then $d \varphi^{t} \circ d \varphi$ maps $\Span \{ E_{1}, FE_{1} \}^{\perp}$ onto itself.
Since $d \varphi^{t} \circ d \varphi$ is the endomorphism associated to $\varphi^{*}h$, for 
$Y \in \Span \{ E_{1}, FE_{1} \}^{\perp}$, we have
\begin{equation*}
g(d \varphi^{t} \circ d \varphi(Y), E_{1}) = \varphi^{*}h(Y, E_{1})= \lambda_{1}^{2}g(Y, E_{1}) =0,
\end{equation*}
and similarly for $FE_{1}$.
\end{proof}

This enables a particular expression for the derivatives of eigenvalues of the first fundamental form of a pseudo horizontally weakly conformal mapping.

\begin{proposition}
Let $\varphi: (M^{m}, g) \to (N^{2n}, J, h)$ be a pseudo horizontally weakly conformal map of rank $2r$ into an almost Hermitian manifold and $\{E_{i}, F E_{i}, E_{\alpha}\}_{i=1,\dots,r}^{\alpha=1,\dots,m-2r}$ an adapted orthonormal frame of eigenvectors of $\varphi^{*}h$. 
Then, for $E_{k}\in\Gamma((\ker d\varphi)^{\perp}), k \neq i$
\begin{align}\label{elamb}
&(\lambda_{i}^{2}- \lambda_{k}^{2}) g(\nabla_{E_{i}}E_{i}+\nabla_{FE_{i}}FE_{i}, E_{k})=\\
&3(d \varphi^{*}\Omega) (E_{i}, FE_{i}, E_{k})+E_{k}(\lambda_{i}^{2})
-\lambda_{k}^{2} g\left(F[(\nabla_{E_{i}}F)E_{i}+(\nabla_{FE_{i}}F)FE_{i}], E_{k}\right), \notag
\end{align}
where $\Omega$ is the fundamental 2-form on $(N, J, h)$, i.e. $\Omega(X, Y) = h(X, JY)$.
\end{proposition}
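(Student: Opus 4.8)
The plan is to reduce the whole identity to the symmetrised second fundamental form over the eigenplane $\Span\{E_i,FE_i\}$ and then to exploit the $(F,J)$-holomorphicity. By the preceding proposition $E_i$ and $FE_i$ are both eigenvectors for $\lambda_i^2$, so I may apply the second expression \eqref{dlambdat} twice, once with $E_i$ and once with $FE_i$ in the rôle of the differentiated eigenvector (legitimate because $E_k$ is a frame member distinct from both). Averaging the two identities symmetrises the direction-dependent term and yields
\begin{equation*}
(\lambda_i^2-\lambda_k^2)\,g(\nabla_{E_i}E_i+\nabla_{FE_i}FE_i,E_k)=E_k(\lambda_i^2)+h\big(d\varphi(E_k),\nabla d\varphi(E_i,E_i)+\nabla d\varphi(FE_i,FE_i)\big).
\end{equation*}
The whole statement is therefore equivalent to identifying $h\big(d\varphi(E_k),\nabla d\varphi(E_i,E_i)+\nabla d\varphi(FE_i,FE_i)\big)$ with the two remaining terms on the right-hand side of the Proposition.

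For the main computation I differentiate the holomorphicity relation $d\varphi\circ F=J\circ d\varphi$. Writing $d\varphi(FE_i)=J\,d\varphi(E_i)$ and applying $\nabla^\varphi_{FE_i}$, then treating the mixed term $\nabla d\varphi(E_i,FE_i)$ along $E_i$ in the same way, I can express both $\nabla d\varphi(FE_i,FE_i)$ and $J\,\nabla d\varphi(E_i,FE_i)$ through $\nabla d\varphi(E_i,E_i)$, the pulled-back $\nabla^\varphi J$, and $\nabla F$. The essential point is a cancellation: the copy of $\nabla d\varphi(E_i,E_i)$ generated via $J^2=-\mathrm{id}$ cancels the original one, leaving
\begin{equation*}
\nabla d\varphi(E_i,E_i)+\nabla d\varphi(FE_i,FE_i)=(\nabla^\varphi_{FE_i}J)d\varphi(E_i)+J(\nabla^\varphi_{E_i}J)d\varphi(E_i)-d\varphi\big(F(\nabla_{E_i}F)E_i+(\nabla_{FE_i}F)E_i\big).
\end{equation*}
Pairing with $d\varphi(E_k)$ splits the right-hand side into a ``$\nabla F$'' part and a ``$\nabla J$'' part.

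The $\nabla F$ part is treated with the eigenvector relation $\varphi^*h(E_k,\cdot)=\lambda_k^2\,g(E_k,\cdot)$, giving $-\lambda_k^2[g(E_k,F(\nabla_{E_i}F)E_i)+g(E_k,(\nabla_{FE_i}F)E_i)]$; the first summand already matches the statement, while for the second I use the $f$-structure identity $F(\nabla_{FE_i}F)FE_i=\pi_{\HH}(\nabla_{FE_i}F)E_i$ (from differentiating $F^2=-\pi_{\HH}$ and $F|_{\VV}=0$), so that contraction with the \emph{horizontal} vector $E_k$ turns it into $g(F(\nabla_{FE_i}F)FE_i,E_k)$. For the $\nabla J$ part I recognise $h\big(d\varphi(E_k),(\nabla^\varphi_{FE_i}J)d\varphi(E_i)\big)+h\big(d\varphi(E_k),J(\nabla^\varphi_{E_i}J)d\varphi(E_i)\big)$ as the pulled-back cyclic sum defining $\varphi^*(d\Omega)(E_i,FE_i,E_k)$: using $(\nabla_W\Omega)(Y,Z)=h(Y,(\nabla_W J)Z)$ together with $(\nabla_W J)J=-J(\nabla_W J)$ and the skew-symmetry of $\nabla_W J$, the two displayed terms reproduce exactly two of the three cyclic summands, while the third (the one differentiating in the $E_k$-direction) vanishes because $h\big(X,(\nabla_W J)JX\big)=0$. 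This cyclic sum equals $3(d\varphi^*\Omega)(E_i,FE_i,E_k)$ for the normalisation of the exterior derivative in force (for which $d$ of a $2$-form is $\tfrac13$ of the cyclic covariant sum), and substituting both parts back gives the claim.

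The main obstacle is the bookkeeping of the second step: differentiating the holomorphicity relation produces several second-fundamental-form and connection terms, and one must track precisely which argument of $\nabla F$ is being differentiated and confirm that the $\nabla d\varphi(E_i,E_i)$ contributions cancel rather than accumulate. The secondary subtlety is the recognition of the $\nabla J$ part as a genuine cyclic exterior-derivative sum; this hinges on the vanishing $h\big(X,(\nabla_W J)JX\big)=0$ of the $E_k$-direction term, without which the expression would fail to close up into $d\varphi^*\Omega$, and on the horizontality of $E_k$, which is exactly the hypothesis $E_k\in\Gamma((\ker d\varphi)^{\perp})$.
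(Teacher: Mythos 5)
Your argument is correct and is precisely the route the paper indicates: its proof consists of the single remark that \eqref{elamb} follows by ``combining \eqref{dlambdat} and \eqref{nabladfi}'', and your two applications of \eqref{dlambdat} (to $E_i$ and $FE_i$) together with your differentiation of $d\varphi\circ F=J\circ d\varphi$ are exactly that combination, with the details --- the cancellation of $\nabla d\varphi(E_i,E_i)$, the use of $F^2=-\pi_{\HH}$ on the $\nabla F$ terms, and the vanishing of the $E_k$-direction summand of the cyclic sum --- correctly supplied. No discrepancies with the paper's (terse) proof.
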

\begin{proof}
This relation can be computed directly or combining \eqref{dlambdat} and 
\begin{equation} \label{nabladfi}
\left( \nabla^{\varphi}_{X}J \right) d \varphi (Y) =
d \varphi \left( (\nabla_{X}F)Y \right)+\nabla d \varphi (X, FY)- 
J \nabla d \varphi (X,Y),  
\end{equation}
for all $X,Y \in \Gamma((\ker d\varphi)^{\perp})$.
\end{proof}

\section{Applications to almost submersions}

For almost submersions, the derivatives of the eigenvalues encode information on the horizontal distribution and harmonicity.

\begin{proposition}
Let $\varphi: (M^{m}, g) \to (N^{n}, h)$ be an almost submersion, with $\VV = \ker d\varphi$ its vertical distribution and $\HH=\VV^{\perp}$, its horizontal distribution. 
Let $\{E_{i}, E_{\alpha} \}_{i=1,\dots,n}^{\alpha=1,\dots,m-n}$ be an  
orthonormal frame of eigenvectors of $\varphi^{*}h$ such that $E_{i}\in \HH, \, \forall i= 1,\dots,n$ and $E_{\alpha} \in \VV , \, \forall \alpha=1,\dots,m-n$.
Then, at regular points, the mean curvature $\mu^{\HH}$ of the horizontal distribution is given by
$$\mu^{\HH}=\frac{1}{n}\grad^{\VV}\ln(\lambda_{1} \cdots \lambda_{n}).$$
In particular for a horizontally weakly conformal map, we recover the expression
$$\mu^{\HH}=\grad^{\VV}\ln\lambda .$$
The map $\varphi$ is harmonic if and only if
\begin{equation}\label{harm-gen}
E_{k}[e(\varphi)-\lambda_{k}^{2}]=\sum_{i=1}^{n}(\lambda_{i}^{2}- \lambda_{k}^{2}) g(\nabla_{E_{i}}E_{i}, E_{k})
-(m-n) \lambda_{k}^{2} g(\mu^{\VV}, E_{k}), 
\end{equation}
for all $k=1,\dots,n$, where $e(\varphi)$ is the energy density of $\varphi$.
\end{proposition}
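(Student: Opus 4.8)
The plan is to read off both assertions from a single computation of the tension field. Recall that $\varphi$ is harmonic iff its tension field $\tau(\varphi)=\sum_{i=1}^{n}\nabla d\varphi(E_{i},E_{i})+\sum_{\alpha=1}^{m-n}\nabla d\varphi(E_{\alpha},E_{\alpha})$ vanishes. At a regular point the horizontal images $d\varphi(E_{1}),\dots,d\varphi(E_{n})$ are orthogonal of norm $\lambda_{k}>0$, hence form a basis of $T_{\varphi(x)}N$; since $\tau(\varphi)$ is a section of $\varphi^{-1}TN$, it vanishes exactly when $h(\tau(\varphi),d\varphi(E_{k}))=0$ for every $k=1,\dots,n$ (vanishing on the dense set of regular points then extends by continuity). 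So the whole problem reduces to evaluating the scalars $h(\tau(\varphi),d\varphi(E_{k}))$ and matching them against the right-hand side of~\eqref{harm-gen}.

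For the mean curvature formula I would feed a vertical index into~\eqref{dlambdat}. Putting $k=\alpha$ vertical and $i$ horizontal, the term $h(d\varphi(E_{\alpha}),\cdot)$ drops since $E_{\alpha}\in\VV$ and the factor $\lambda_{k}^{2}=0$ kills the remainder of the last summand, leaving $E_{\alpha}(\lambda_{i}^{2})=2\lambda_{i}^{2}\,g(E_{\alpha},\nabla^{M}_{E_{i}}E_{i})$. Dividing by $\lambda_{i}^{2}$, summing over $i$ and contracting with $E_{\alpha}$ yields $\grad^{\VV}\ln(\lambda_{1}\cdots\lambda_{n})=\sum_{i}(\nabla_{E_{i}}E_{i})^{\VV}=n\,\mu^{\HH}$, which is the claimed identity; the horizontally weakly conformal case follows at once by setting all $\lambda_{i}$ equal to $\lambda$, so that $\ln(\lambda_{1}\cdots\lambda_{n})=n\ln\lambda$.

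For the harmonicity criterion I would split $\tau(\varphi)$ into its vertical and horizontal contributions. Because $d\varphi(E_{\alpha})=0$, one has $\nabla d\varphi(E_{\alpha},E_{\alpha})=-d\varphi(\nabla^{M}_{E_{\alpha}}E_{\alpha})=-d\varphi((\nabla_{E_{\alpha}}E_{\alpha})^{\HH})$, whence $\sum_{\alpha}\nabla d\varphi(E_{\alpha},E_{\alpha})=-(m-n)\,d\varphi(\mu^{\VV})$; pairing with $d\varphi(E_{k})$ and using that $E_{k}$ is an eigenvector for $\lambda_{k}^{2}$ gives the vertical contribution $-(m-n)\lambda_{k}^{2}\,g(\mu^{\VV},E_{k})$. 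For the horizontal part I would apply~\eqref{dlambda} to the diagonal term to get $h(\nabla d\varphi(E_{k},E_{k}),d\varphi(E_{k}))=\tfrac12 E_{k}(\lambda_{k}^{2})$, and~\eqref{dlambdat} to each off-diagonal term $i\neq k$ to get $h(\nabla d\varphi(E_{i},E_{i}),d\varphi(E_{k}))=-\tfrac12 E_{k}(\lambda_{i}^{2})+(\lambda_{i}^{2}-\lambda_{k}^{2})g(\nabla_{E_{i}}E_{i},E_{k})$.

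The step demanding care is the recombination, where signs and index ranges must be handled cleanly. Recognizing $e(\varphi)=\tfrac12\sum_{i}\lambda_{i}^{2}$, the scalar terms collapse as $\tfrac12 E_{k}(\lambda_{k}^{2})-\tfrac12\sum_{i\neq k}E_{k}(\lambda_{i}^{2})=-E_{k}(e(\varphi)-\lambda_{k}^{2})$, while the $i=k$ summand of $(\lambda_{i}^{2}-\lambda_{k}^{2})g(\nabla_{E_{i}}E_{i},E_{k})$ vanishes, allowing the sum to run over all $i=1,\dots,n$. Adding the vertical contribution gives $h(\tau(\varphi),d\varphi(E_{k}))=-E_{k}(e(\varphi)-\lambda_{k}^{2})+\sum_{i}(\lambda_{i}^{2}-\lambda_{k}^{2})g(\nabla_{E_{i}}E_{i},E_{k})-(m-n)\lambda_{k}^{2}g(\mu^{\VV},E_{k})$, and setting this equal to zero is precisely~\eqref{harm-gen}.
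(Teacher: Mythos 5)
Your proof is correct. The mean-curvature half is exactly the paper's argument: feed a vertical index into \eqref{dlambdat} to get $E_{\alpha}(\ln\lambda_{i})=g(\nabla_{E_{i}}E_{i},E_{\alpha})$ and sum over $i$. For the harmonicity criterion, however, you take a genuinely different (if closely related) route: you compute $h(\tau(\varphi),d\varphi(E_{k}))$ directly, splitting the tension field into its vertical contribution $-(m-n)\,d\varphi(\mu^{\VV})$ and its horizontal contribution, and converting the diagonal and off-diagonal terms via \eqref{dlambda} and \eqref{dlambdat}. The paper instead works with the stress-energy tensor $S_{\varphi}=e(\varphi)g-\varphi^{*}h$, computes $(\mathrm{div}\,\varphi^{*}h)(E_{k})$ in the eigenframe, and invokes the identity $\mathrm{div}\,S_{\varphi}=-\langle\tau(\varphi),d\varphi\rangle$ together with density of the regular set; your observation that $\{d\varphi(E_{k})\}_{k=1}^{n}$ is a basis of $T_{\varphi(x)}N$ at regular points plays the same role there. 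The two computations are equivalent term by term, and in fact the authors explicitly record your derivation as an alternative in the remark immediately following the proposition (item (2)): summing \eqref{dlambdat} over $i\neq k$, using $E_{k}(\lambda_{k}^{2})=2h(\nabla d\varphi(E_{k},E_{k}),d\varphi(E_{k}))$, and adding the fibre term. What the stress-energy route buys is the two by-products listed in the remark (conservativity of $\varphi^{*}h$ versus constancy of $e(\varphi)$, and the characterization of minimal fibres); what your route buys is that the vertical term $-(m-n)\lambda_{k}^{2}g(\mu^{\VV},E_{k})$ appears transparently from $\sum_{\alpha}\nabla d\varphi(E_{\alpha},E_{\alpha})$ rather than from splitting a sum over all $m$ indices at the end.
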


\begin{proof}
If $\varphi: (M^{m}, g) \to (N^{n}, h)$ is an almost submersion and 
$\{E_{i}, E_{\alpha} \}_{i=1,\dots,n}^{\alpha=1,\dots,m-n}$ is an  
orthonormal frame of eigenvectors of $\varphi^{*}h$, then $d \varphi (E_{\alpha})=0$ and $\lambda_{\alpha}=0$ and, 
from \eqref{dlambdat}, we have
\begin{equation}\label{nei}
E_{\alpha}(\lambda_{i}^{2})=2\lambda_{i}^{2}g(\nabla^{M}_{E_{i}}E_{i}, E_{\alpha}),
\end{equation}
hence
\begin{equation*}
E_{\alpha}(\ln\lambda_{i})=g(\nabla^{M}_{E_{i}}E_{i}, E_{\alpha}).
\end{equation*}
Therefore $\left(\nabla^{M}_{E_{i}}E_{i}\right)^{\VV}=\grad^{\VV}\ln\lambda_{i}$, and
\begin{equation*} 
\mu^{\HH}=\frac{1}{n}\grad^{\VV}\ln(\lambda_{1} \cdots \lambda_{n}).
\end{equation*}
We characterize the harmonicity of $\varphi$ with the divergence of the stress-energy tensor
$$
S_{\varphi}=e(\varphi)g - \varphi^{*}h,
$$
since
\begin{equation}\label{divh}
\mathrm{div}S_{\varphi}=d e(\varphi) - \mathrm{div}\varphi^{*}h.
\end{equation}
Then
\begin{align*}
\left(\mathrm{div}\varphi^{*}h\right)(E_{k})&=\sum_{i=1}^{m}(\nabla_{E_{i}}\varphi^{*}h)(E_{i}, E_{k})\\
&= \sum_{i=1}^{m}E_{i}(\varphi^{*}h(E_{i}, E_{k}))-\varphi^{*}h(\nabla_{E_{i}}E_{i}, E_{k})-\varphi^{*}h(E_{i}, \nabla_{E_{i}}E_{k})\\
&=\sum_{i=1}^{m}E_{i}(\lambda_{i}^{2}\delta_{ik})-\lambda_{k}^{2}g(\nabla_{E_{i}}E_{i}, E_{k})-\lambda_{i}^{2}g(E_{i}, \nabla_{E_{i}}E_{k})\\
&=E_{k}(\lambda_{k}^{2})+\sum_{i=1}^{n}(\lambda_{i}^{2}-\lambda_{k}^{2})g(\nabla_{E_{i}}E_{i}, E_{k}).
\end{align*}
Since $\varphi$ is submersive on a dense set, its harmonicity is equivalent to
\begin{equation}\label{divo}
\left(\mathrm{div}S_{\varphi}\right)(E_{k})=0 , \quad \forall k=1,\dots,n, 
\end{equation}
that is
$$E_{k}[e(\varphi)-\lambda_{k}^{2}]=\sum_{i=1}^{m}(\lambda_{i}^{2}- \lambda_{k}^{2}) g(\nabla_{E_{i}}E_{i}, E_{k}), $$
for all $k=1,\dots,n$.
Decomposing the sum in vertical and horizontal parts, yields the result.
\end{proof}

\begin{remark}
\begin{enumerate}
\item A computation similar to~\eqref{nei}, for eigenvectors $X$ and $Y$, corresponding to the same eigenvalue $\lambda$ of $\varphi^{*}h$, yields
$$
(\mathcal{L}_{V}g)(X, Y) =-V(\ln \lambda^{2})g(X, Y).
$$
\item The proposition can also be obtained summing up \eqref{dlambdat} for $i \neq k$,
taking into account $E_{k}(\lambda_{k}^{2})=2h(\nabla d \varphi (E_{k},E_{k}), d \varphi (E_{k}))$
and adding the term
$-2h( d \varphi (E_{k}), \sum_{\alpha} \nabla d \varphi (E_{\alpha}, E_{\alpha}))$, which equals $(m-n) \lambda_{k}^{2} g(\mu^{\VV}, E_{k})$.
\item Some straightforward consequences of \eqref{divh} are
\begin{enumerate}
\item A harmonic map has constant energy density 
if and only if its first fundamental form is conservative (i.e. $\mathrm{div}\varphi^{*}h=0$);
\item A harmonic map has minimal fibres if and only if $\mathrm{div}^{\HH} \varphi^{*}h = d e(\varphi)$.
\end{enumerate}
\end{enumerate}
\end{remark}

It is well-known that for horizontally weakly conformal maps, i.e. $\lambda_{1}^{2}= \cdots =\lambda_{n}^{2}=\lambda^{2}$, Condition~\eqref{harm-gen} becomes
$$
(n-2)\mathrm{grad}^{\HH}(\mathrm{ln} \lambda)=-(m-n)\mu^{\VV}.
$$

The special formula for the derivatives of the eigenvalues of the first fundamental form of a pseudo horizontally weakly conformal almost submersion leads to a re-writing of the condition of harmonicity.

\begin{corollary}
A pseudo horizontally weakly conformal almost submersion $\varphi$ is harmonic if and only if the eigenvalues and eigenvector fields of $\varphi^{*}h$ satisfy
\begin{align*}
&3\sum_{i=1}^{n}(d \varphi^{*}\Omega) (E_{i}, FE_{i}, E_{k})-
\lambda_{k}^{2} g(F\mathrm{div}^{\HH}F+(m-n)\mu^{\VV}, E_{k})=0,\\
&3\sum_{i=1}^{n}(d \varphi^{*}\Omega) (E_{i}, FE_{i}, FE_{k})-
\lambda_{k}^{2} g(F\mathrm{div}^{\HH}F+(m-n)\mu^{\VV}, FE_{k})=0, 
\end{align*}
for all $k=1,\dots,n$, where $\Omega(X,Y) = h(X,JY)$, or equivalently
\begin{align*}
&3\sum_{i=1}^{n}\left[(d \varphi^{*}\Omega) (E_{i}, FE_{i}, E_{k})-\lambda_{k}^{2}d \Phi (E_{i}, FE_{i}, E_{k})\right]
-\lambda_{k}^{2} (m-n)g(\mu^{\VV}, E_{k})=0,\\
&3\sum_{i=1}^{n}\left[(d \varphi^{*}\Omega) (E_{i}, FE_{i}, FE_{k})-\lambda_{k}^{2}d \Phi (E_{i}, FE_{i}, FE_{k})\right]
-\lambda_{k}^{2} (m-n)g(\mu^{\VV}, FE_{k})=0, 
\end{align*}
for all $k=1,\dots,n$, where $\Phi(X,Y) = g(X,FY)$.
\end{corollary}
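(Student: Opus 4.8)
The plan is to read off harmonicity from the vanishing of the stress-energy tensor, exactly as in~\eqref{harm-gen}, and then feed in the pseudo horizontally weakly conformal structure through~\eqref{elamb}. Since $\varphi$ is an almost submersion into an almost Hermitian target, the preceding proposition supplies an adapted frame $\{E_i, FE_i, E_\alpha\}$ with $\HH = \Span\{E_i, FE_i\}$, so the horizontal indices in~\eqref{harm-gen} run over both the $E_i$ and the $FE_i$. Harmonicity is then equivalent to $(\mathrm{div}\,S_\varphi)(E_k)=0$ and $(\mathrm{div}\,S_\varphi)(FE_k)=0$ for every $k$; I would prove the first identity and obtain the second by the verbatim argument with $E_k$ replaced by $FE_k$ (using $F(FE_k)=-E_k$ and that $FE_k$ carries the same eigenvalue $\lambda_k^2$).

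First I would group the horizontal sum $\sum(\lambda_i^2-\lambda_k^2)g(\nabla_{E_a}E_a,E_k)$ of~\eqref{harm-gen} into $F$-pairs, writing it as $\sum_i(\lambda_i^2-\lambda_k^2)g(\nabla_{E_i}E_i+\nabla_{FE_i}FE_i,E_k)$, in which the $i=k$ summand drops out because its coefficient vanishes. Substituting~\eqref{elamb} pair by pair replaces each summand by $3(d\varphi^*\Omega)(E_i,FE_i,E_k)+E_k(\lambda_i^2)-\lambda_k^2 g(F[(\nabla_{E_i}F)E_i+(\nabla_{FE_i}F)FE_i],E_k)$. Since $e(\varphi)=\sum_i\lambda_i^2$, the terms $\sum_{i\neq k}E_k(\lambda_i^2)=E_k(e(\varphi)-\lambda_k^2)$ cancel exactly against the left-hand side $E_k[e(\varphi)-\lambda_k^2]$ of~\eqref{harm-gen}, while the vertical mean-curvature term $(m-n)\lambda_k^2 g(\mu^{\VV},E_k)$ is carried along unchanged. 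What survives is the $\Omega$-term together with $\lambda_k^2\sum_i g(F[(\nabla_{E_i}F)E_i+(\nabla_{FE_i}F)FE_i],E_k)=\lambda_k^2 g(F\mathrm{div}^{\HH}F,E_k)$, which is the first asserted equation. To replace the restricted sum $\sum_{i\neq k}$ by the full $\sum_{i=1}^n$ of the statement I must check that the $i=k$ contribution is harmless: its $\Omega$-part vanishes by antisymmetry (repeated $E_k$), and $g(F[(\nabla_{E_k}F)E_k+(\nabla_{FE_k}F)FE_k],E_k)=0$ follows from skew-symmetry of $\nabla F$ together with $F^2=-\mathrm{Id}$ on $\HH$, which force $g((\nabla_{E_k}F)E_k,FE_k)=0$ and $g((\nabla_{FE_k}F)FE_k,FE_k)=0$.

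For the equivalent second form I would trade the intrinsic term $g(F\mathrm{div}^{\HH}F,E_k)$ for the exterior derivative of the fundamental $2$-form $\Phi(X,Y)=g(X,FY)$. Using the Kostant-type identity $d\Phi(X,Y,Z)=g(Y,(\nabla_X F)Z)-g(X,(\nabla_Y F)Z)+g(X,(\nabla_Z F)Y)$, evaluated at $(E_i,FE_i,E_k)$ and summed over $i$, the first two cyclic terms reassemble, after applying $(\nabla_X F)FY=(\nabla_X F^2)Y-F(\nabla_X F)Y$ and discarding the vertical pieces that die against the horizontal $E_k$, into $g(F\mathrm{div}^{\HH}F,E_k)$, while the third family of terms $g((\nabla_{E_k}F)E_i,FE_i)$ vanishes termwise by the same skew-symmetry argument. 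Substituting the resulting relation between $g(F\mathrm{div}^{\HH}F,E_k)$ and $\sum_i d\Phi(E_i,FE_i,E_k)$ into the first form yields the second, and the $FE_k$-equations follow identically. The main obstacle is precisely this last conversion: one must track the combinatorial constants produced by the cyclic formula and confirm the systematic vanishing of the cross terms $g((\nabla_\cdot F)\cdot,F\cdot)$, all of which rests on $F$ being a skew metric $f$-structure with $F^2=-\mathrm{Id}$ on the horizontal distribution.
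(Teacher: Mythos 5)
Your argument follows the same route as the paper's own proof: specialize \eqref{harm-gen} to the adapted frame by grouping the horizontal sum into $F$-pairs, substitute the summed form of \eqref{elamb} over $i\neq k$, and cancel $E_{k}[e(\varphi)-\lambda_{k}^{2}]$ against $E_{k}\bigl[\sum_{i\neq k}\lambda_{i}^{2}\bigr]$ using $e(\varphi)=\sum_{i}\lambda_{i}^{2}$. You additionally verify that the $i=k$ contribution is harmless and sketch the passage to the $d\Phi$ formulation, two points the paper leaves implicit; these checks are correct and consistent with its conventions.
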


\begin{proof}
According to \eqref{harm-gen}, the harmonicity of a pseudo horizontally weakly conformal almost submersion is given by
\begin{equation}\label{phm}
E_{k}[e(\varphi)- \lambda_{k}^{2}]=\sum_{i=1}^{n}(\lambda_{i}^{2}- \lambda_{k}^{2}) g(\nabla_{E_{i}}E_{i}+\nabla_{FE_{i}}FE_{i}, E_{k})
-(m-n)\lambda_{k}^{2} g(\mu^{\VV}, E_{k}),
\end{equation}
for $k=1,\dots,n$.\\
As the eigenvalues are double, 
$$e(\varphi)=\lambda_{1}^{2}+\cdots +\lambda_{n}^{2},$$
so 
$$E_{k}[e(\varphi)- \lambda_{k}^{2}]=E_{k}\left[\sum_{i \neq k}\lambda_{i}^{2} \right].$$
Observe that $E_{k}(\lambda_{k}^{2})$ is not involved in the harmonicity condition.\\
Summing \eqref{elamb} for $i\neq k$, we obtain
\begin{align*}
&\sum_{i=1}^{n}(\lambda_{i}^{2}- \lambda_{k}^{2}) g(\nabla_{E_{i}}E_{i}+\nabla_{FE_{i}}FE_{i}, E_{k})=\\
&3\sum_{i=1}^{n}(d \varphi^{*}\Omega) (E_{i}, FE_{i}, E_{k})+E_{k}[e(\varphi)-\lambda_{k}^{2}]
-\lambda_{k}^{2} g(F\mathrm{div}^{\HH}F, E_{k}),
\end{align*}
and, replacing in \eqref{phm}, we get the proposition.
\end{proof}

\begin{remark}
The harmonicity of a pseudo horizontally weakly conformal almost submersion with values in a (1,2)-symplectic manifold is given by
the criterion (\cite{LM})
$$
F\mathrm{div}^{\HH}F+(m-n)\mu^{\VV}=0.
$$
Notice that, in this case, harmonicity is intrinsic to the $f$-manifold $(M, F, g)$, 
i.e. it can be formulated independently of the first fundamental form of the map, as the left-hand term is equal to $F\mathrm{div}F$.
This is no longer true when the target manifold is not (1,2)-symplectic.
\end{remark}

Clearly, by Equation~\eqref{dlambda}, totally geodesic maps have constant eigenvalues.
\begin{proposition}
A submersion is totally geodesic if and only if its first fundamental form is parallel. Consequently, 
its eigenvalues are constant.
\end{proposition}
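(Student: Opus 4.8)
The plan is to reduce the whole statement to a single pointwise algebraic identity relating the second fundamental form $\nabla d\varphi$ to the covariant derivative of the first fundamental form. First I would establish, for an \emph{arbitrary} smooth map and exactly along the lines of the computation of $E_k(\lambda_i^2)$ in the first Lemma, the general formula
\begin{equation*}
(\nabla_X \varphi^* h)(Y,Z) = h(\nabla d\varphi(X,Y), d\varphi(Z)) + h(d\varphi(Y), \nabla d\varphi(X,Z)),
\end{equation*}
obtained by expanding $X\bigl(h(d\varphi(Y), d\varphi(Z))\bigr)$ with the pull-back connection $\nabla^{\varphi}$ and substituting $\nabla^{\varphi}_{X} d\varphi(Y) = \nabla d\varphi(X,Y) + d\varphi(\nabla_X Y)$. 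This identity requires no hypothesis on $\varphi$, and it settles the forward implication at once: if $\varphi$ is totally geodesic, i.e.\ $\nabla d\varphi \equiv 0$, then the right-hand side vanishes and $\varphi^* h$ is parallel.

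For the converse I would introduce the trilinear form $T(X,Y,Z) = h(\nabla d\varphi(X,Y), d\varphi(Z))$, a genuine tensor since each factor is tensorial. It carries two symmetries. It is symmetric in $(X,Y)$ because the second fundamental form is symmetric, and the hypothesis $\nabla \varphi^* h = 0$ reads $T(X,Y,Z) + T(X,Z,Y) = 0$, so $T$ is antisymmetric in $(Y,Z)$. The standard permutation argument then forces $T$ to vanish: chaining the transpositions of the first pair and of the last pair around a full cycle produces the string $T(X,Y,Z) = T(Y,X,Z) = -T(Y,Z,X) = -T(Z,Y,X) = T(Z,X,Y) = T(X,Z,Y) = -T(X,Y,Z)$, whence $T \equiv 0$.

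At this stage the submersion assumption enters. The vanishing of $T$ states that $h(\nabla d\varphi(X,Y), d\varphi(Z)) = 0$ for all $Z$; since $\varphi$ is a submersion, the vectors $\{d\varphi_x(Z) : Z \in T_xM\}$ span the whole of $T_{\varphi(x)}N$ at regular points, and nondegeneracy of $h$ then yields $\nabla d\varphi(X,Y) = 0$. Thus $\varphi$ is totally geodesic, completing the equivalence. The concluding assertion is immediate from Equation~\eqref{dlambda}: with $\nabla d\varphi \equiv 0$ one gets $E_k(\lambda_i^2) = 0$ for all $i,k$, so every eigenvalue is constant.

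The only delicate point, and the reason the submersion hypothesis cannot be dropped, lies in the last step. The symmetry gymnastics already give $T \equiv 0$ for any map, but passing from $T \equiv 0$ to $\nabla d\varphi \equiv 0$ needs $d\varphi$ to be onto $TN$ so that $d\varphi(Z)$ exhausts the target tangent space. For a general, non-submersive map one would only conclude that the image of the second fundamental form is $h$-orthogonal to the image of $d\varphi$, which is strictly weaker than total geodesy.
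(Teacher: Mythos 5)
Your proof is correct and takes essentially the same route as the paper: your displayed identity for $(\nabla_X\varphi^*h)(Y,Z)$ is the paper's Equation~\eqref{mag1}, and your symmetry/antisymmetry permutation argument on the trilinear form $T$ is exactly the content of the paper's polarization formula~\eqref{mag2}, which is obtained there by ``using \eqref{mag1} three times.'' The remaining steps --- surjectivity of $d\varphi$ to conclude $\nabla d\varphi=0$, and Equation~\eqref{dlambda} for the constancy of the eigenvalues --- also coincide with the paper's.
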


\begin{proof}
One implication was proved in \cite{BE}.\\
The proposition relies on the following relationship:
a map $\varphi: (M,g)\to (N,h)$ between Riemannian manifolds, satisfies the identity
\begin{equation}\label{mag1}
\left( \nabla_X \varphi^* h \right)(Y, Z)= h(\nabla d \varphi(X, Y), d \varphi(Z)) + h(d \varphi(Y), \nabla d \varphi(X,Z)).
\end{equation}
and its counterpart
\begin{align}\label{mag2}
&h(\nabla d\varphi(X, Y), d \varphi(Z))\\
& =
\frac{1}{2}\left[\left( \nabla_X \varphi^* h \right)(Y, Z)+
\left( \nabla_Y \varphi^* h \right)(Z, X)-
\left( \nabla_Z \varphi^* h \right)(X,Y)\right].\notag
\end{align}
Indeed,
\begin{align*}
&\left( \nabla_X \varphi^* h \right)(Y, Z) \\
&=X \left(\varphi^* h(Y,Z) \right) 
- \varphi^* h(\nabla_{X}Y, Z)
- \varphi^* h(Y, \nabla_{X}Z)\\
&= X \left(h(d \varphi(Y),d \varphi(Z)) \right) 
- h(d \varphi(\nabla_{X}Y), d \varphi(Z))
- h(d \varphi(Y), d \varphi(\nabla_{X}Z))\\
&= h(\nabla^{\varphi}_{X}d \varphi(Y) - d \varphi(\nabla_{X}Y),d \varphi(Z)) 
+ h(d \varphi(Y), \nabla^{\varphi}_{X}d \varphi(Z)-d\varphi(\nabla_{X}Z)),
\end{align*}
that gives us \eqref{mag1}.

Using \eqref{mag1} three times, we get \eqref{mag2}.
\end{proof}

If $\varphi: (M^{m}, g) \to (N^{n}, h)$ a harmonic submersion, we can perturb the metric $g$ biconformally
$$
\ov{g}=\sigma^{-2}g^{\HH}+\rho^{-2}g^{\VV},
$$
where $\sigma$ and $\rho$ are functions on $M$.

\begin{theorem}
Let $\varphi: (M^{m}, g) \longrightarrow (N^{n}, h)$ a smooth almost submersion between Riemannian manifolds. Then any two of the following conditions imply the third
\begin{enumerate}
\item $\varphi$ is harmonic, with respect to the metric $g$;
\item $\varphi$ is harmonic, with respect to the metric $\ov g$;
\item $\grad^{\HH} \rho^{m-n}\sigma^{n-2}=0$.
\end{enumerate}
\end{theorem}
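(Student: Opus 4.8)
The plan is to reduce the entire trichotomy to a single transformation formula for the tension field under the biconformal change, since $\varphi$ is harmonic with respect to $g$ (resp. $\ov g$) exactly when $\tau_g(\varphi)=0$ (resp. $\tau_{\ov g}(\varphi)=0$). Around a regular point I would fix an adapted orthonormal frame $\{E_i,E_\alpha\}_{i=1,\dots,n}^{\alpha=1,\dots,m-n}$ with $E_i\in\HH$ and $E_\alpha\in\VV$. The first point to record is that the biconformal change preserves the orthogonality of the splitting, $\ov g(\HH,\VV)=0$, so that $\{\sigma E_i,\rho E_\alpha\}$ is a $\ov g$-orthonormal frame and $\HH,\VV$ remain horizontal and vertical for $\ov g$. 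Since $d\varphi$ and the pull-back connection $\nabla^\varphi$ do not depend on the domain metric, the only metric-dependent ingredient is the Levi-Civita connection appearing inside the second fundamental form $\nabla d\varphi$.

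The main computational step is to control the difference tensor $D(X,Y)=\ov\nabla_X Y-\nabla_X Y$, where $\nabla,\ov\nabla$ are the Levi-Civita connections of $g,\ov g$. It is symmetric and determined by the Koszul-type identity
$$2\ov g(D(X,Y),Z)=(\nabla_X\ov g)(Y,Z)+(\nabla_Y\ov g)(X,Z)-(\nabla_Z\ov g)(X,Y).$$
Writing $\ov g=\sigma^{-2}P_\HH+\rho^{-2}P_\VV$ in terms of the projection bilinear forms $P_\HH,P_\VV$ (with $P_\HH+P_\VV=g$ and $\nabla g=0$), I would compute $\nabla\ov g$ and extract only what survives after applying $d\varphi$, namely the horizontal components of the traces $\sum_i D(E_i,E_i)$ and $\sum_\alpha D(E_\alpha,E_\alpha)$, because $d\varphi$ annihilates $\VV$. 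A direct calculation gives $\sum_i(D(E_i,E_i))^\HH=(n-2)\grad^\HH\ln\sigma$, whereas the vertical-frame trace contributes the mean curvature $\mu^\VV$ of the fibres together with a term in $\grad^\HH\ln\rho$.

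Assembling the horizontal contribution $\sigma^2\sum_i[\nabla d\varphi(E_i,E_i)-d\varphi(D(E_i,E_i))]$ with the vertical one $-\rho^2\sum_\alpha d\varphi((\nabla_{E_\alpha}E_\alpha+D(E_\alpha,E_\alpha))^\HH)$, and substituting $\tau_g(\varphi)=\sum_i\nabla d\varphi(E_i,E_i)-(m-n)d\varphi(\mu^\VV)$, the terms carrying $\mu^\VV$ cancel and one is left with
$$\tau_{\ov g}(\varphi)=\sigma^2\left[\tau_g(\varphi)-d\varphi\left(\grad^\HH\ln(\rho^{m-n}\sigma^{n-2})\right)\right].$$
This cancellation is precisely what produces the weight $\rho^{m-n}\sigma^{n-2}$. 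To conclude, recall that at a regular point $d\varphi|_\HH$ is injective, so $d\varphi(\grad^\HH f)=0$ if and only if $\grad^\HH f=0$, and that condition (3) is equivalent to $\grad^\HH\ln(\rho^{m-n}\sigma^{n-2})=0$. Reading off the displayed identity, the vanishing of any two of $\tau_g(\varphi)$, $\tau_{\ov g}(\varphi)$ and $d\varphi(\grad^\HH\ln(\rho^{m-n}\sigma^{n-2}))$ forces the third; since $\varphi$ is submersive on a dense set, the pointwise identity at regular points suffices.

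The main obstacle I anticipate is the bookkeeping in $\nabla\ov g$ and the traces of $D$: the covariant derivatives of $P_\HH,P_\VV$ encode the second fundamental forms of the two distributions, and one must carefully track which of these terms survive horizontal projection. The delicate point is the exact cancellation of the $\mu^\VV$ contributions coming from the horizontal and vertical traces, since it is this cancellation, rather than any single trace, that singles out the combination $\rho^{m-n}\sigma^{n-2}$.
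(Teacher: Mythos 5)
Your argument is correct, but it takes a genuinely different route from the paper's. The paper never touches the tension field directly: it characterizes harmonicity via the divergence of the stress-energy tensor, rewritten through the eigenvalues of $\varphi^{*}h$ as Equation~\eqref{harm-gen}, then observes that under the biconformal change the eigendata rescale as $\ov{\lambda}_{i}^{2}=\sigma^{2}\lambda_{i}^{2}$, $\ov{E}_{i}=\sigma E_{i}$, $\ov{E}_{\alpha}=\rho E_{\alpha}$, computes $\ov{g}(\ov{\nabla}_{\ov{E}_{i}}\ov{E}_{i},\ov{E}_{k})$ and $\ov{g}(\ov{\nabla}_{\ov{E}_{\alpha}}\ov{E}_{\alpha},\ov{E}_{k})$ by Lie-bracket manipulations, and watches the difference of the two harmonicity conditions collapse to $\lambda_{k}^{2}E_{k}(\ln\sigma^{n-2}\rho^{m-n})$. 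You instead derive the global transformation law
$$\tau_{\ov g}(\varphi)=\sigma^{2}\bigl[\tau_{g}(\varphi)-d\varphi\bigl(\grad^{\HH}\ln(\rho^{m-n}\sigma^{n-2})\bigr)\bigr]$$
from the difference tensor $D=\ov\nabla-\nabla$ via the Koszul-type identity; I checked your two trace computations ($\sum_{i}(D(E_{i},E_{i}))^{\HH}=(n-2)\grad^{\HH}\ln\sigma$ and the vertical trace producing $(m-n)$ times $\mu^{\VV}$ and $\grad^{\HH}\ln\rho$ terms with the coefficients needed for the $\mu^{\VV}$ cancellation) and they are right. Your route is essentially the one the paper's own remark attributes to Pantilie and to Baird--Wood as ``a different way''; it buys a clean, quotable intermediate formula for $\tau_{\ov g}(\varphi)$ and makes the exponent $\rho^{m-n}\sigma^{n-2}$ appear structurally, whereas the paper's computation stays inside its eigenvalue formalism, which is the organizing theme of the whole article. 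Two small points to make explicit if you write this up: the equivalence of $\grad^{\HH}\rho^{m-n}\sigma^{n-2}=0$ with $\grad^{\HH}\ln(\rho^{m-n}\sigma^{n-2})=0$ uses $\sigma,\rho>0$, and the passage from the dense set of regular points to all of $M$ is by continuity of $\tau_{\ov g}(\varphi)$, $\tau_{g}(\varphi)$ and of the gradient term.
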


\begin{proof}
Since $\varphi^{*}h(E_{i}, Y)=\lambda_{i}^{2}g(E_{i}, Y)=\lambda_{i}^{2}\sigma^{2}\ov{g}(E_{i}, Y)$, with respect to the new metric $\ov{g}$, the eigenvalues and adapted frames of eigenvectors of $\varphi^{*}h$ are given by
$$\ov{\lambda_{i}}^{2}=\sigma^{2} \lambda_{i}^{2}; \quad  \ov{E}_{i}=\sigma E_{i}; \quad \ov{E}_{\alpha}=\rho E_{\alpha}.
$$

To check Equation~\eqref{harm-gen} with respect to $\ov{g}$, we compute each term separately
\begin{align*}
\ov{g}\left(\ov{\nabla}_{\ov{E}_{i}}\ov{E}_{i}, \ov{E}_{k}\right)&=-\ov{g}(\ov{E}_{i}, [\ov{E}_{i}, \ov{E}_{k}])\\
&=-\sigma^{-2} g(\sigma E_{i},\  [\sigma E_{i}, \sigma E_{k}])\\
&=-\sigma^{-2} g(\sigma E_{i},\  \sigma^{2}[E_{i}, E_{k}]+\sigma E_{i}(\sigma)E_{k}-\sigma E_{k}(\sigma)E_{i})\\
&=- \sigma g(E_{i}, [E_{i}, E_{k}])+ E_{k}(\sigma)\\
&=\sigma g(\nabla_{E_{i}}E_{i}, E_{k})+ E_{k}(\sigma)
\end{align*}
and
\begin{align*}
\ov{g}\left(\ov{\nabla}_{\ov{E}_{\alpha}}\ov{E}_{\alpha}, \ov{E}_{k}\right)&=-\ov{g}(\ov{E}_{\alpha}, [\ov{E}_{\alpha}, \ov{E}_{k}])\\
&=-\rho^{-2} g(\rho E_{\alpha}, [\rho E_{\alpha}, \sigma E_{k}])\\
&=-\rho^{-2} g(\rho E_{\alpha},\  \sigma \rho [E_{\alpha}, E_{k}]+\rho E_{\alpha}(\sigma)E_{k}-\sigma E_{k}(\rho)E_{\alpha})\\
&=- \sigma g(E_{\alpha}, [E_{\alpha}, E_{k}])+ \sigma \rho^{-1}E_{k}(\rho)\\
&=\sigma g(\nabla_{E_{\alpha}}E_{\alpha}, E_{k})+ \sigma E_{k}(\mathrm{ln}\rho).
\end{align*}
Using these formulas in \eqref{harm-gen} with $\ov{g}$, we have
\begin{align*}
&\ov{E}_{k}[\ov{e}(\varphi)-\ov{\lambda}_{k}^{2}]=\sum_{i=1}^{n}(\ov{\lambda}_{i}^{2}- 
\ov{\lambda}_{k}^{2}) \ov{g}(\ov{\nabla}_{\ov{E}_{i}}\ov{E}_{i}, \ov{E}_{k}) 
-(m-n) \ov{\lambda}_{k}^{2} \ov{g}(\ov{\mu}^{\VV}, \ov{E}_{k}); \\
&\sigma E_{k}[\sigma^{2}(e(\varphi)-\lambda_{k}^{2})]=\sum_{i=1}^{n}\sigma^{2}(\lambda_{i}^{2}- \lambda_{k}^{2})
\left[\sigma g(\nabla_{E_{i}}E_{i}, E_{k})+ E_{k}(\sigma)\right]\\
&-(m-n) \sigma^{2} \lambda_{k}^{2} \left[\sigma g(\mu^{\VV}, E_{k})+ \sigma E_{k}(\ln\rho)\right]; \\
&E_{k}(\ln\sigma^{2})(e(\varphi)-\lambda_{k}^{2})+E_{k}[e(\varphi)-\lambda_{k}^{2}]\\
&=\sum_{i=1}^{n}(\lambda_{i}^{2}- \lambda_{k}^{2}) \left[g(\nabla_{E_{i}}E_{i}, E_{k})+ E_{k}(\ln \sigma)\right]-(m-n) \lambda_{k}^{2} \left[g(\mu^{\VV}, E_{k})+ E_{k}(\ln\rho)\right].
\end{align*}
As $\varphi$ is harmonic with respect to $g$, the condition simplifies to
\begin{align*}
E_{k}(\ln\sigma^{2})(e(\varphi)-\lambda_{k}^{2})=
&\sum_{i=1}^{n}(\lambda_{i}^{2}- \lambda_{k}^{2})E_{k}(\ln \sigma)-
(m-n) \lambda_{k}^{2} E_{k}(\ln\rho),
\end{align*}
but
\begin{align*}
&2E_{k}(\ln\sigma) e(\varphi)-2\lambda_{k}^{2}E_{k}(\ln\sigma)\\
&-2 e(\varphi) E_{k}(\ln\sigma)+ n\lambda_{k}^{2}E_{k}(\ln \sigma) +(m-n) \lambda_{k}^{2} E_{k}(\ln\rho) \\
&= \lambda_{k}^{2}[( n-2)E_{k}(\ln \sigma)+ (m-n) E_{k}(\ln\rho)] \\
&= \lambda_{k}^{2} E_{k}(\ln \sigma^{n-2}\rho^{m-n}).
\end{align*}
\end{proof}

\begin{remark}
\begin{enumerate}
\item Compare with \cite[Corollary 4.6.10]{BW} and \cite{Slobodeanu}.
\item The implications $(1) \wedge (3) \Rightarrow (2)$ and $(2) \wedge (3) \Rightarrow (1)$ have already been proved, in a different way, in \cite{Pantilie}.
\end{enumerate}
\end{remark}

\section{A Schwarz lemma for pseudo harmonic morphisms}

Let $\varphi: (M^{m}, g) \to (N^{n}, h)$ be a smooth map of rank $r \leq k=\mathrm{min}(m,n) $. Denote by 
$\wedge ^{p} d\varphi$ the induced map on $p$-vectors, its norm can be expressed in terms of the eigenvalues of the first fundamental form
$\lambda_{1}^{2}\geq\lambda_{2}^{2}\geq\cdots\geq \lambda_{r}^{2}$, by
$$\norm{\wedge ^{p} d\varphi}^2 =\sum_{i_{1} < \cdots <i_{p}}\lambda_{i_{1}}^{2}\cdots \lambda_{i_{p}}^{2}.$$
Moreover, for $p=2$, we have the inequality
\begin{equation*}
\norm{\wedge ^{2} d\varphi} \leq \sqrt{\frac{k-1}{2k}} \norm{d \varphi}^{2},
\end{equation*}
with equality if and only if $r=k$ and $\lambda_{1}^{2}=\lambda_{2}^{2}= \cdots = \lambda_{r}^{2}$.

\begin{definition}\cite{Goldberg}
A map $\varphi$ is said to be of bounded dilatation of order $K$ if the ratio of the first two eigenvalues of $\varphi^{*}h$ is bounded by $K^2$, i.e.
$$\lambda_{1}^{2}/\lambda_{2}^{2} \leq K^{2}.$$
\end{definition}
For a map of bounded dilatation of order $K$, one can easily establish the reverse inequality (\cite{GH})
\begin{equation*}
\norm{d\varphi}^{2} \leq k K \norm{\wedge ^{2} d\varphi}.
\end{equation*}

If $M$ is a complete manifold with non-positive sectional curvature and Ricci curvature bounded from below by a negative constant $-A$ and 
the sectional curvature of the manifold $N$ is bounded above by a negative constant $-B$, Goldberg and Har'el showed in ~\cite{GH} that any harmonic map $\varphi : M \to N$ of bounded dilatation of order $K$ satisfies
$$ \norm{\wedge^r d\varphi}^{2/r} \leq (k/2)\binom{k}{r}^{1/r} \frac{A}{B} K^2,$$
for any $r$, $1\leq r \leq k$.
In particular, for $r=1$
\begin{equation*}
e(\varphi) \leq k^{2} K^{2} \frac{A}{2B}.
\end{equation*}
If $\mathrm{Ric}^{M} \geq 0$, then $\varphi$ is constant.

The binary nature of the eigenvalues of pseudo horizontally weakly conformal maps, enables the extension of some results previously known for holomorphic maps.

\begin{proposition}
\begin{itemize}
\item\cite{GH} Any pseudo horizontally weakly conformal map in an almost Hermitian manifold is of bounded dilatation of order 1.
\item\cite{Lemaire} If M and N are compact manifolds and N is (1,2)-symplectic of negative sectional curvature, then there exists 
only a finite number of non-constant pseudo harmonic morphisms from M to N.
\item\cite{EL} If M is a compact manifold and N is a (1,2)-symplectic manifold of nonpositive sectional curvature, 
then two homotopic pseudo harmonic morphisms from M to N which agree at a point are identical.
\end{itemize}
\end{proposition}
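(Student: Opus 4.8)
The plan is to dispatch the three assertions one at a time, each reducing, once the pairing of eigenvalues is invoked, to a known statement about harmonic maps.

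For the first assertion I would argue straight from the structure proposition above: if $X$ is an eigenvector of $\varphi^{*}h$ then so is $FX$, for the same eigenvalue, so every non-zero eigenvalue of a pseudo horizontally weakly conformal map has even multiplicity. Listing the eigenvalues with multiplicity as $\lambda_{1}^{2}\geq\lambda_{2}^{2}\geq\cdots$, the two largest therefore coincide, $\lambda_{1}^{2}=\lambda_{2}^{2}$, whence $\lambda_{1}^{2}/\lambda_{2}^{2}=1$ and $\varphi$ is of bounded dilatation of order $1$. This structural fact is what feeds the remaining two assertions, since it places pseudo harmonic morphisms inside the class of harmonic maps of bounded dilatation to which the cited theorems apply.

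The third assertion is the most immediate. A pseudo harmonic morphism is by definition harmonic, and the statement involves only harmonicity together with the nonpositivity of the curvature of $N$; so it is a direct application of the Eells--Lemaire result \cite{EL}. Concretely, for two homotopic harmonic maps into a nonpositively curved target with $M$ compact, the function $x\mapsto d_{N}(\varphi_{0}(x),\varphi_{1}(x))$ is constant, so if it vanishes at one point it vanishes everywhere and the maps agree. Here the $(1,2)$-symplectic and pseudo horizontally weakly conformal hypotheses serve only to guarantee that the maps in question are genuinely harmonic.

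The second assertion is where I expect the real work. The first step is to combine the reverse inequality $\norm{d\varphi}^{2}\leq kK\norm{\wedge^{2}d\varphi}$ (with $K=1$, by the first assertion) with Lemaire's finiteness scheme \cite{Lemaire}: harmonic maps into a compact manifold of strictly negative sectional curvature are isolated, by the local rigidity coming from the sign of the curvature, while the set of harmonic maps of uniformly bounded energy is compact; discreteness together with compactness then forces finiteness. The main obstacle, and the only place where the $(1,2)$-symplectic assumption on $N$ and the $(F,J)$-holomorphicity of $\varphi$ are really needed, is the uniform energy bound: one must control $e(\varphi)=\lambda_{1}^{2}+\cdots+\lambda_{r}^{2}$ simultaneously for all non-constant pseudo harmonic morphisms. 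I would obtain it by expressing the energy through the pullback $\varphi^{*}\Omega$ of the fundamental $2$-form, up to correction terms governed by $(d\omega)^{1,2}=0$, and bounding its integral over the compact manifold $M$ cohomologically, exactly as in the holomorphic case; the delicate point is that $\Omega$ need not be closed when $N$ is only $(1,2)$-symplectic, so the cohomological control has to be extracted from the harmonic, $(F,J)$-holomorphic part of the map rather than from $\Omega$ alone.
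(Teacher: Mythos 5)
The paper gives no proof of this proposition at all: it is stated as a package of citations, and the only input the paper itself supplies is the earlier structure result that eigenvalues of $\varphi^{*}h$ come in pairs $\{E_{i},FE_{i}\}$. Your treatment of the first bullet is exactly that reduction ($\lambda_{1}^{2}=\lambda_{2}^{2}$, hence dilatation of order $1$), and your treatment of the third bullet is the intended one-line reduction to \cite{EL} --- with the caveat that the relevant statement in \cite{EL} is the unique-continuation property for homotopic harmonic maps \emph{of bounded dilatation} into nonpositively curved targets, so the pseudo horizontal weak conformality is not merely ``guaranteeing harmonicity'': it is what supplies the bounded-dilatation hypothesis without which the cited result does not apply. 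Your parenthetical claim that $d_{N}(\varphi_{0}(x),\varphi_{1}(x))$ is constant for arbitrary homotopic harmonic maps and that this alone settles the matter is not justified (the geodesic joining $\varphi_{0}(x_{0})$ to $\varphi_{1}(x_{0})$ in the relevant homotopy could a priori be a nontrivial loop).

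The genuine gap is in the second bullet. The theorem of \cite{Lemaire} is precisely: for compact $M$, $N$ with $N$ of negative sectional curvature, there are only finitely many non-constant \emph{harmonic maps of bounded dilatation of order $K$} from $M$ to $N$. Once bullet one is established, bullet two is an immediate corollary and no further work is required; this is clearly how the paper intends it to be read. Instead you set out to re-prove Lemaire's finiteness theorem, and the step you flag as ``the real work'' --- a uniform energy bound obtained cohomologically from $\varphi^{*}\Omega$ --- would fail: $\Omega$ is not closed when $N$ is only $(1,2)$-symplectic, and for a pseudo horizontally weakly conformal map the energy density $e(\varphi)=\sum\lambda_{i}^{2}$ is not computed by integrating the pullback of any closed form, so there is no cohomological (homotopy-invariant) quantity controlling $E(\varphi)$ here. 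The uniform bound that actually powers Lemaire's argument is pointwise, not cohomological: it is the Weitzenb\"ock estimate of \cite{GH} (reproduced in Section 4 of the paper), $e(\varphi)\leq k^{2}K^{2}A/(2B)$, applied at a maximum of $e(\varphi)$ on the compact domain, with $-A$ a lower Ricci bound for $M$ and $-B$ an upper sectional curvature bound for $N$; this is then combined with Hartman-type rigidity in each homotopy class. So your proof of bullet two both misses the intended one-line reduction and, as a standalone argument, rests on an energy bound whose proposed derivation does not go through.
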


\begin{lemma}\label{lem1}
A pseudo horizontally weakly conformal map $\varphi : (M,g) \to (N^{2n},J,h)$ satisfies
\begin{equation*}
\frac{\norm{d\varphi}^{2}}{\norm{\wedge ^{2} d\varphi}} < 2.
\end{equation*}
Moreover, if $\ell_{n-1}=\tfrac{\lambda_{1}}{\lambda_{n}}$, then
\begin{equation*}
\frac{\norm{d\varphi}^{2}}{\norm{\wedge ^{2} d\varphi}} \leq 2 \sqrt{1- \frac{n-1}{2n-1}\ell_{n-1}^{-4}},
\end{equation*}
with equality if and only if the map is horizontally weakly conformal, that is if $\lambda_{1}^{2}= \cdots = \lambda_{n}^{2}$, so $\ell_{n-1}=1$.
\end{lemma}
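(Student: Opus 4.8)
The plan is to reduce everything to a single scalar invariant of the eigenvalues and then to an elementary (if sharp) inequality. By the preceding proposition the non-zero eigenvalues of $\varphi^{*}h$ occur in equal pairs, the eigenspace of each $\lambda_{i}^{2}$ containing both $E_{i}$ and $FE_{i}$; listed with multiplicity they are $\lambda_{1}^{2},\lambda_{1}^{2},\dots,\lambda_{n}^{2},\lambda_{n}^{2}$. Writing $S=\sum_{i=1}^{n}\lambda_{i}^{2}$ and $Q=\sum_{i=1}^{n}\lambda_{i}^{4}$, I would first record $\norm{d\varphi}^{2}=2S$ and, applying $\norm{\wedge^{2}d\varphi}^{2}=\tfrac12\big[(\sum_{a}\mu_{a})^{2}-\sum_{a}\mu_{a}^{2}\big]$ to the doubled list, $\norm{\wedge^{2}d\varphi}^{2}=2S^{2}-Q$. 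Hence
\[
\frac{\norm{d\varphi}^{4}}{\norm{\wedge^{2}d\varphi}^{2}}=\frac{4S^{2}}{2S^{2}-Q}=\frac{4}{2-q},\qquad q:=\frac{Q}{S^{2}}\in\Big[\tfrac1n,1\Big),
\]
so both assertions reduce to controlling the single ratio $q$ (together with the extreme eigenvalues in the refined case).

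The strict bound is then immediate: since the map has at least two non-zero eigenvalues, $S^{2}-Q=\sum_{i\neq j}\lambda_{i}^{2}\lambda_{j}^{2}>0$, so $q<1$ and $\tfrac{4}{2-q}<4$, giving $\norm{d\varphi}^{2}/\norm{\wedge^{2}d\varphi}<2$. For the refined estimate I would rewrite the target inequality, after squaring, as $\tfrac{4}{2-q}\le 4(1-c)$ with $c=\tfrac{n-1}{2n-1}\ell_{n-1}^{-4}$, that is
\[
q\le\frac{1-2c}{1-c}\quad\Longleftrightarrow\quad c\le\frac{S^{2}-Q}{2S^{2}-Q},
\]
and one checks directly that for $\lambda_{1}=\dots=\lambda_{n}$ one has $q=1/n$, $\ell_{n-1}=1$, and both sides coincide, which should be the sole equality case.

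To prove $q\le\frac{1-2c}{1-c}$ I would fix the extreme eigenvalues $a=\lambda_{n}^{2}\le\lambda_{i}^{2}\le\lambda_{1}^{2}=b$ (so that $c$ is fixed) and maximise $q=\sum x_{i}^{2}/(\sum x_{i})^{2}$ over $x_{i}\in[a,b]$ with $x_{1}=b$ and $x_{n}=a$ held. Differentiating in a single $x_{j}$ shows the only interior critical point is a minimum, so the maximum is attained at a vertex of the box; it therefore suffices to treat the two-valued configurations with $p$ eigenvalues equal to $\lambda_{1}^{2}$ and $n-p$ equal to $\lambda_{n}^{2}$, where $1\le p\le n-1$. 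On such a configuration the inequality $c\le\frac{S^{2}-Q}{2S^{2}-Q}$ becomes a one-parameter polynomial inequality in $p$ and $t=\lambda_{1}^{2}/\lambda_{n}^{2}\ge 1$; for $n=2$ it reduces to $(t-1)(3t+1)(2t+1)\ge 0$, and I expect the general case to factor through $(t-1)$ in the same fashion, pinning equality exactly at $t=1$.

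The main obstacle is this last step: controlling the finite family of polynomial inequalities uniformly in $p$ and confirming that the worst vertex (near $p=\tfrac{na}{a+b}$, degenerating to $p=1$ in the large-dilatation regime $b>(n-1)a$) still obeys the bound. A convenient shortcut in the moderate-dilatation range is the P\'olya--Szeg\H{o} (Schweitzer) inequality $q\le\frac{(a+b)^{2}}{4nab}$, which turns the claim into the scalar inequality $\frac{(a+b)^{2}}{4nab}\le\frac{1-2c}{1-c}$; this is effective precisely when the right-hand side is at most $1$, and the complementary large-dilatation case is then handled directly, where $c\to 0$ while the ratio remains strictly below $2$.
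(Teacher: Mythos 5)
Your setup coincides with the paper's: the doubling of the eigenvalues gives $\norm{d\varphi}^{2}=2S$ and $\norm{\wedge^{2}d\varphi}^{2}=2S^{2}-Q$ with $S=\sum_{i}\lambda_{i}^{2}$, $Q=\sum_{i}\lambda_{i}^{4}$, and your derivation of the strict bound $<2$ from $q=Q/S^{2}<1$ is exactly the paper's first step. Your reduction of the refined inequality to $c\le\frac{1-q}{2-q}$ with $c=\frac{n-1}{2n-1}\ell_{n-1}^{-4}$ is also correct. The genuine gap is that you never prove this reduced inequality. The vertex reduction itself is sound (each one-variable section of $q$ has only an interior minimum), but it leaves a family of polynomial inequalities indexed by $p$ and $n$, of which you verify only $n=2$ and for the rest offer an expectation that the factor $(t-1)$ persists. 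The P\'olya--Szeg\H{o} fallback, as you yourself concede, fails in the large-dilatation regime, and there ``the ratio remains strictly below $2$'' is not the claimed bound $2\sqrt{1-c}$. The ``only if'' half of the equality statement is likewise left open, since you only check that horizontal weak conformality gives equality.

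The paper closes this in two lines using precisely the quantities you introduced, with no case analysis. Newton's (Maclaurin's) inequality, i.e.\ Cauchy--Schwarz in the form $S^{2}\le nQ$, gives $q\ge 1/n$, hence $2-q\le\frac{2n-1}{n}$; and the crude bounds $S^{2}\le n^{2}\lambda_{1}^{4}$ together with $S^{2}-Q=2\sum_{i<j}\lambda_{i}^{2}\lambda_{j}^{2}\ge n(n-1)\lambda_{n}^{4}$ give $1-q\ge\frac{n-1}{n}\ell_{n-1}^{-4}$. Dividing the two estimates yields $\frac{1-q}{2-q}\ge\frac{n-1}{2n-1}\ell_{n-1}^{-4}=c$, which is exactly your target inequality, and equality forces $q=1/n$, hence $\lambda_{1}^{2}=\cdots=\lambda_{n}^{2}$ and $\ell_{n-1}=1$. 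I recommend replacing your vertex-and-cases strategy by this argument: it needs no distinction on $p$ nor on the size of the dilatation, and it delivers the equality case for free.
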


\begin{proof}
Recall that
$$
\frac{\norm{d \varphi}^{2}}{\norm{\wedge ^{2} d \varphi}}=
\frac{\sum_{i=1}^{n} \lambda_{i}^{2}}{\sqrt{\sum_{1\leq i < j\leq n} \lambda_{i}^{2}\lambda_{j}^{2}}},
$$
since the eigenvalues are double
$$
\frac{\norm{d \varphi}^{2}}{\norm{\wedge ^{2} d \varphi}}=\frac{2\sum_{i=1}^{n} \lambda_{i}^{2}}{\sqrt{\sum_{i=1}^{n} \lambda_{i}^{4}
+4\sum_{1 \leq i < j \leq n} \lambda_{i}^{2}\lambda_{j}^{2}}}.
$$
But, by Newton's inequalities 
$$
\sum_{i=1}^{n} \lambda_{i}^{4}+4\sum_{1 \leq i < j \leq n} \lambda_{i}^{2}\lambda_{j}^{2} = 
\left( \sum_{i=1}^{n} \lambda_{i}^{2} \right)^{2}+ 2\sum_{1 \leq i < j \leq n} \lambda_{i}^{2}\lambda_{j}^{2}
\leq \frac{2n-1}{n} \left( \sum_{i=1}^{n} \lambda_{i}^{2} \right)^{2},
$$ 
so
\begin{align*}
\frac{\norm{d \varphi}^{2}}{\norm{\wedge ^{2} d \varphi}}=
&2\sqrt{1- 
\frac{2\sum_{1 \leq i < j \leq n} \lambda_{i}^{2}\lambda_{j}^{2}}{\left( \sum_{i=1}^{n} \lambda_{i}^{2} \right)^{2}+ 2\sum_{1 \leq i < j \leq n} \lambda_{i}^{2}\lambda_{j}^{2}}}\\
\leq & 2\sqrt{1- 
\frac{2n}{2n-1} \frac{\sum_{1 \leq i < j \leq n} \lambda_{i}^{2}\lambda_{j}^{2}}{\left( \sum_{i=1}^{n} \lambda_{i}^{2} \right)^{2}}}.
\end{align*}
Since
$$
\frac{\left( \sum_{i=1}^{n} \lambda_{i}^{2} \right)^{2}}{\sum_{1 \leq i < j \leq n} \lambda_{i}^{2}\lambda_{j}^{2}} \leq \frac{2n}{n-1} \ell_{n-1}^{4},
$$
we obtain the lemma.
\end{proof}

With this lemma, we can improve the bound on the energy density of a pseudo harmonic morphism.

\begin{theorem}
Let $(M^{m}, g)$ be a complete Riemannian manifold with non-positive sectional curvature and Ricci curvature bounded from below by a negative constant $-A$ and $(N^{2n},J, h)$ a $(1,2)$-symplectic almost Hermitian manifold of sectional curvature bounded above by a negative constant $-B$
If $\varphi : M \to N$ is a pseudo harmonic morphism then 
\begin{equation*} 
 e(\varphi) \leq \frac{A}{B}.
\end{equation*}
If $B \geq 2A$, then $\varphi$ is distance-decreasing.
In particular, if $\mathrm{Ric}^{M} \geq 0$, then $\varphi$ is constant.
If the $(n-1)^{th}$ dilatation is bounded, that is $\ell_{n-1}^{2}=\frac{\lambda_{1}^{2}}{\lambda_{n}^{2}} \leq L^{2}$, we have
\begin{equation*} 
e(\varphi) \leq \frac{A}{B} \left( 1- \frac{n-1}{2n-1}L^{-4} \right).
\end{equation*}
\end{theorem}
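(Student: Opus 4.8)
The plan is to run the Bochner--Weitzenb\"ock argument behind the Goldberg--Har'el Schwarz lemma~\cite{GH}, but to insert the sharp reverse inequality of Lemma~\ref{lem1} at the one place where their proof loses a factor: their general bound uses only the crude estimate $\norm{d\varphi}^{2}\leq kK\norm{\wedge^{2}d\varphi}$, which for $K=1$ still carries a factor $k$ and hence a factor $k^{2}$ in the energy bound. Since $\varphi$ is a pseudo harmonic morphism it is a harmonic, pseudo horizontally weakly conformal map into an almost Hermitian manifold, so its eigenvalues come in equal pairs; thus it is of bounded dilatation of order $K=1$ and $\norm{d\varphi}^{2}=2e(\varphi)=2\sum_{i=1}^{n}\lambda_{i}^{2}$.

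First I would apply the Bochner formula for the harmonic map $\varphi$ to the function $u=\norm{d\varphi}^{2}$, evaluated in an orthonormal eigenframe $\{E_{i}\}$ of $\varphi^{*}h$. Because the vectors $d\varphi(E_{i})$ are mutually orthogonal, the curvature term of the target collapses to
\[
\sum_{i,j}\Big(\norm{d\varphi(E_{i})}^{2}\norm{d\varphi(E_{j})}^{2}-h(d\varphi(E_{i}),d\varphi(E_{j}))^{2}\Big)=2\norm{\wedge^{2}d\varphi}^{2},
\]
and, using $\mathrm{sec}^{N}\leq -B$, $\mathrm{Ric}^{M}\geq -A$ and discarding the nonnegative term $\norm{\nabla d\varphi}^{2}$, I obtain the differential inequality
\[
\tfrac{1}{2}\Delta u\ \geq\ -A\,u+2B\,\norm{\wedge^{2}d\varphi}^{2}.
\]

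The decisive step is to replace $\norm{\wedge^{2}d\varphi}^{2}$ using Lemma~\ref{lem1}: since $\norm{d\varphi}^{2}<2\norm{\wedge^{2}d\varphi}$ we have $\norm{\wedge^{2}d\varphi}^{2}>u^{2}/4$, so $\tfrac{1}{2}\Delta u\geq -Au+\tfrac{B}{2}u^{2}$. Applying the Omori--Yau generalized maximum principle---available because $M$ is complete with Ricci curvature bounded below---at a sequence realizing $u^{*}=\sup u$ gives $\tfrac{B}{2}(u^{*})^{2}\leq A\,u^{*}$, hence $u^{*}\leq 2A/B$ and $e(\varphi)\leq A/B$. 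The corollaries then follow quickly: as $\lambda_{1}^{2}\leq e(\varphi)$, the hypothesis $B\geq 2A$ forces $\lambda_{1}^{2}\leq 1/2$, so the top singular value of $d\varphi$ is less than $1$ and $\varphi$ is distance-decreasing; if $\mathrm{Ric}^{M}\geq 0$ the inequality holds for every $A>0$, whence $e(\varphi)=0$ and $\varphi$ is constant; and the refined estimate is obtained verbatim by using the second, sharper inequality of Lemma~\ref{lem1}, which under $\ell_{n-1}^{2}\leq L^{2}$ gives $\norm{d\varphi}^{2}\leq C\norm{\wedge^{2}d\varphi}$ with $C=2\sqrt{1-\tfrac{n-1}{2n-1}L^{-4}}$, leading to $u^{*}\leq AC^{2}/(2B)$.

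The step I expect to demand the most care is the invocation of the maximum principle on the complete, non-compact domain $M$: the Omori--Yau principle presupposes that $u$ be bounded above, so one must either produce an a priori bound or localise with an exhaustion. Here the superquadratic term $\tfrac{B}{2}u^{2}$ dominating the linear term $Au$ is exactly the feature that makes a Cheng--Yau type argument work, and the hypotheses that $M$ be complete with non-positive sectional curvature and Ricci bounded below are precisely those under which this analytic step was carried out in~\cite{GH}. The remaining ingredients---the Bochner computation and the elementary algebra with Lemma~\ref{lem1}---are routine.
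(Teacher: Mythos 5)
Your proposal is correct and follows essentially the same strategy as the paper: run the Goldberg--Har'el Weitzenb\"ock/Schwarz-lemma argument for harmonic maps and, at the key step, replace the crude bounded-dilatation inequality by the sharper estimate $\norm{\wedge^{2}d\varphi}\geq\tfrac{1}{2}\norm{d\varphi}^{2}$ (and its refined form) from Lemma~\ref{lem1}, which is exactly what yields $e(\varphi)\leq A/B$ and the $L$-dependent improvement. The only divergence is in the analytic machinery for the non-compact maximum principle --- the paper imports Goldberg's conformal exhaustion $(M_{\rho},e^{v_{\rho}}g)$ as a black box, whereas you invoke Omori--Yau/Cheng--Yau; you correctly flag the need to justify boundedness of $\norm{d\varphi}^{2}$ there, and the superquadratic term makes this standard, so no genuine gap remains.
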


\begin{proof}
We follow the argumentation of \cite{GH}. Let $\varphi : M \to N$ be a pseudo harmonic morphism, that is a pseudo horizontally weakly conformal harmonic map. In \cite{Goldberg}, Goldberg uses the Weitzenb\"ock formula for harmonic maps and an exhaustion of $M$ by conformal submanifolds $(M_{\rho}, e^{v_{\rho}}g)$, to show that there exists a maximum point $x$ of the function $e^{-2v_{\rho}}\norm{d\varphi}^2$ and a function $\epsilon(\rho)$ with $\lim_{\rho \to +\infty}\epsilon(\rho) =0$ such that
$$- e^{-2v_{\rho}} R^{N}(d\varphi(e_{i}),d\varphi(e_{j}),d\varphi(e_{i}),d\varphi(e_{j})) \leq (A+ \epsilon(\rho)) \norm{d\varphi}^2 ,$$
at the maximum $x$.
The condition on the sectional curvature of $(N,h)$ implies that
$$R^{N}(d\varphi(e_{i}),d\varphi(e_{j}),d\varphi(e_{i}),d\varphi(e_{j})) \leq -2B\norm{\wedge^2 d\varphi}^2.$$
Therefore, at this maximum point
$$2e^{-2v_{\rho}}B\norm{\wedge^2 d\varphi}^2 \leq (A+ \epsilon(\rho)) \norm{d\varphi}^2.$$
Combining with Lemma~\ref{lem1}, we obtain
$$\frac{1}{2}e^{-2v_{\rho}}B\norm{ d\varphi}^4 \leq (A+ \epsilon(\rho)) \norm{d\varphi}^2,$$
that is
$$\frac{1}{2}e^{-2v_{\rho}}B\norm{ d\varphi}^2 \leq (A+ \epsilon(\rho)),$$
at the maximum point $x$, hence everywhere. Let $\rho\to +\infty$, then $v_{\rho}$ goes to zero and we obtain
$$\norm{ d\varphi}^2 \leq 2\frac{A}{B}.$$
\end{proof}

Since the unit disk has a K\"ahler metric with constant negative holomorphic sectional curvature, we have the following

\begin{corollary} 
Let $(M^{m}, g)$ be a complete Riemannian manifold with non-positive sectional curvature and Ricci curvature bounded from below by a negative constant $-A$. If $M$ is equipped with an $f$-structure then any bounded $f$-holomorphic function on M is constant.
\end{corollary}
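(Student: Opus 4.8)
The plan is to realise a bounded $f$-holomorphic function as a pseudo harmonic morphism into a negatively curved K\"ahler target and then invoke the preceding Theorem. First I would use boundedness: the image of $\varphi\colon M\to\CC$ lies in some disk $D$, which I equip with the Poincar\'e metric $h$. Since $D$ has complex dimension one, its holomorphic sectional curvature coincides with its sectional (Gauss) curvature, so $(D,J,h)$ is a complete K\"ahler manifold of constant negative sectional curvature $-B$; being K\"ahler it is in particular $(1,2)$-symplectic. Thus $(D,J,h)$ satisfies the integrability and upper curvature hypotheses demanded of the codomain in the Theorem.

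Next I would check that $\varphi\colon M\to(D,J,h)$ is pseudo horizontally weakly conformal. By definition of an $f$-holomorphic map one has $d\varphi\circ F=J\circ d\varphi$; taking adjoints and using that both $F$ and $J$ are skew-adjoint (the metric $f$-structure on $M$ and the Hermitian structure on $D$) gives $F\circ d\varphi^{t}=d\varphi^{t}\circ J$. Composing with $d\varphi$ on the left then yields
\[
d\varphi\circ d\varphi^{t}\circ J=d\varphi\circ F\circ d\varphi^{t}=J\circ d\varphi\circ d\varphi^{t},
\]
so $[d\varphi\circ d\varphi^{t},J]=0$, which is exactly pseudo horizontal weak conformality. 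Note that, the target being of complex dimension one, this forces $d\varphi\circ d\varphi^{t}$ to be a scalar on $TD$, so the single pair of eigenvalues of $\varphi^{*}h$ is automatically double.

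It then remains to see that $\varphi$ is harmonic into $(D,J,h)$, so that it is a pseudo harmonic morphism. Here I would exploit that $h$ is conformal to the flat metric on $\CC$ and that both are K\"ahler, hence $(1,2)$-symplectic: by the criterion recalled above, harmonicity of a pseudo horizontally weakly conformal map into a $(1,2)$-symplectic target is the intrinsic condition $F\Div F=0$ on $(M,F,g)$, which does not involve the chosen target metric. Since $\varphi$ is an $f$-holomorphic function this condition holds, and therefore $\varphi$ is harmonic into $(D,J,h)$ as well. Consequently $\varphi$ is a pseudo harmonic morphism from $M$ to a complete $(1,2)$-symplectic manifold of sectional curvature $\le -B<0$, and applying the Theorem gives the energy bound $e(\varphi)\le A/B$ and, under the curvature assumptions on $M$, the constancy of $\varphi$.

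The hard part is the middle step, namely confirming that a \emph{bounded} $f$-holomorphic function is genuinely a pseudo harmonic morphism with respect to the hyperbolic metric on the image disk---equivalently, that its pull-back data satisfy $F\Div F=0$---because pseudo horizontal weak conformality alone does not tie the domain and target metrics together. The saving observation is that this harmonicity condition is intrinsic to $(M,F,g)$ and hence insensitive to the conformal change from the flat to the Poincar\'e metric on $D$; once this is secured, the conclusion follows directly from the Theorem.
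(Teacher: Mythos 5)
Your overall route---send the bounded function into a disk carrying the Poincar\'e metric, check pseudo horizontal weak conformality from $F$-holomorphy via adjoints, and invoke the preceding Theorem---is exactly the paper's, whose entire justification for the Corollary is the remark that the unit disk has a K\"ahler metric of constant negative holomorphic sectional curvature. The adjoint computation giving $[d\varphi\circ d\varphi^{t},J]=0$ is fine. But two steps do not hold up. First, harmonicity: you correctly isolate this as the hard point, and your observation that the criterion $F\Div F=0$ is intrinsic to $(M,F,g)$ when the target is $(1,2)$-symplectic (so that harmonicity into flat $\CC$ and into the hyperbolic disk coincide) is sound; but you then dispose of it by asserting that the condition holds ``since $\varphi$ is an $f$-holomorphic function''. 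That is a non sequitur: $F\Div F=0$ is a genuine restriction on the metric $f$-structure, and on a general $f$-manifold an $F$-holomorphic function need not be harmonic. Either the Corollary tacitly assumes the function is a pseudo harmonic morphism, or a hypothesis on $F$ must be added; your argument supplies neither.

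Second, and more decisively, the final inference fails. The Theorem gives $e(\varphi)\le A/B$ and yields constancy only under the stronger hypothesis $\mathrm{Ric}^{M}\ge 0$; with only $\mathrm{Ric}^{M}\ge -A$, $A>0$, one obtains a gradient bound, not constancy, and no rescaling of the target metric improves this (replacing $h$ by $ch$ multiplies $B$ by $1/c$ and $e(\varphi)$ by $c$, leaving the bound unchanged). Indeed the identity map of the Poincar\'e disk---complete, non-positively curved, with $\mathrm{Ric}=-1$---is a non-constant bounded holomorphic function, so constancy cannot follow from the stated hypotheses by any argument of this type. This defect is arguably already present in the paper's formulation of the Corollary, but your proof reproduces it by appealing to ``the curvature assumptions on $M$'' at the precise point where the Theorem in fact requires $\mathrm{Ric}^{M}\ge 0$.
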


\begin{example}[Pseudo harmonic morphisms between space forms]

For a real constant $\kappa$, consider the complex $n$-space $\left( N^{n}(\kappa), h^{(\kappa)} \right)$ which, for 
$\kappa=0, >0$ or $<0$, is $\CC^{n}, \CC P^{n}$ or $\BB^{n}$ (the unit ball in $\CC^{n}$), equipped with the K\"ahler metric
$$
h^{(\kappa)}_{i \ov{\jmath}}=\frac{1}{\zeta}\delta_{ij}-\frac{\kappa}{\zeta^{2}}\ov{z}_{i}z_{j},
$$
where $\zeta = 1+\kappa \abs{z}^{2}$. For $\kappa=0, 1$ and $-1$, we obtain the Euclidean, Fubini-Study and Bergman metrics.

We know that $K(X, JX)=2\kappa$, i.e. the holomorphic sectional curvature is constant and equal to $2\kappa$, and the sectional curvature $K^{N(\kappa)}$ is pinched:
$2\kappa \leq K^{N(\kappa)} \leq \frac{\kappa}{2}$ if $\kappa \leq 0$ (with reversed inequalities for $\kappa \geq 0$).

\medskip
Now consider the analogous Sasakian space forms, the only complete, simply connected Sasakian manifolds with constant 
$\phi$-sectional curvature $c$:

\medskip
$\bullet$ $(\RR^{2n+1}, g=\eta\otimes\eta + \frac{1}{4}\sum_{i=1}^{n}(d x^{i})^{2}+(d y^{i})^{2}, c=-3)$, 
where $\eta=\frac{1}{2}(d z-\sum_{i=1}^{n} y^{i} d x^{i})$;

\medskip
$\bullet$ $(\SS^{2n+1}$, induced metric from $(\CC^{n+1}, h^{(0)}), c=1)$ 

\medskip
$\bullet$ $(\BB^{n} \times \RR, g=\eta\otimes\eta+\pi^{*}h^{(\kappa)}, c=2\kappa-3)$, where $\eta=d t +\pi^{*}\omega$, $d \omega$ is the  K\"ahler 2-form of $\BB^{n}$ and $\pi$ the projection of $\BB^{n} \times \RR$ onto the first factor.

The Ricci tensor of these space forms is \cite{Blair}
$$
\mathrm{Ric}(X,Y)=\frac{n(c+3)+c-1}{2}g(X,Y)-\frac{(n+1)(c-1)}{2}\eta(X)\eta(Y).
$$
In all three cases, $\mathrm{Ric}(\xi, \xi)=2n$. Recall that if we make a $\mathcal{D}$-homothetic transformation of 
the contact structure, for example
$$
\ov{g}=ag+a(a-1)\eta\otimes\eta, \quad a>0,
$$
then the above spaces remain Sasakian space forms, with $\phi$-sectional curvature $\ov{c}=\frac{c+3}{a}-3$.

So in these cases, we have

\medskip
$\bullet$ ($\RR^{2n+1}$):\ $\mathrm{Ric}(E_{i},E_{i})=\mathrm{Ric}(\phi E_{i},\phi E_{i})=-2$.

\medskip
$\bullet$ ($\SS^{2n+1}$):\ $\mathrm{Ric}(E_{i},E_{i})=\mathrm{Ric}(\phi E_{i},\phi E_{i})=\frac{2(n+1-a)}{a}$.

\medskip
$\bullet$ ($\BB^{n} \times \RR$):\ $\mathrm{Ric}(E_{i},E_{i})=\mathrm{Ric}(\phi E_{i},\phi E_{i})=\frac{\kappa(n+1)-2a}{a}$.
\end{example}

From these results, we can conclude the following.

\begin{proposition}
\begin{itemize}
\item Any $(\phi,J)$-holomorphic map $\varphi: \RR^{2n+1} \to \BB^{n}$ 
satisfies $\norm{d \varphi}^{2} \leq \frac{8}{\abs{\kappa}}$. If we take $\kappa \leq -8$, 
then $\varphi$ will be distance decreasing. 
\item There exists only a finite number of $(\phi,J)$-holomorphic maps $\varphi: \SS^{2n+1} \to \BB^{n}$. 
\item Any $(\phi,J)$-holomorphic map $\varphi: \BB^{n} \times \RR \to \BB^{n}$
satisfies $\norm{d \varphi}^{2} \leq \frac{4(-\kappa (n+1)+2a)}{a \abs{\kappa}}$. 
For $\kappa \leq -8$, $\varphi$ will be distance decreasing for any $a>4(n+1)$. 
\end{itemize}
All these maps are pseudo harmonic morphisms.
\end{proposition}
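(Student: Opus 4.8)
The plan is to derive all three statements from the main Theorem (the Schwarz lemma for pseudo harmonic morphisms), after first checking that the maps in question really are pseudo harmonic morphisms and then reading off the constants $A$ and $B$ from the curvature data assembled in the Example. First I would verify the claim that every $(\phi,J)$-holomorphic map $\varphi$ from one of the Sasakian space forms into $\BB^{n}$ is a pseudo harmonic morphism. Being $(\phi,J)$-holomorphic makes $\varphi$ pseudo horizontally weakly conformal with induced $f$-structure $F=\phi$, and since $\BB^{n}$ carries the Kähler metric $h^{(\kappa)}$ it is in particular $(1,2)$-symplectic; by the criterion recalled in the Remark it then suffices to check $\phi\,\mathrm{div}\,\phi=0$. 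On a Sasakian manifold $(\nabla_{X}\phi)Y=g(X,Y)\xi-\eta(Y)X$, so $\mathrm{div}\,\phi=2n\xi$ and $\phi\,\mathrm{div}\,\phi=2n\,\phi\xi=0$, whence $\varphi$ is harmonic. I would also record the structural fact that $\phi\xi=0$ forces $J\,d\varphi(\xi)=d\varphi(\phi\xi)=0$, so $\xi\in\ker d\varphi$ and $d\varphi$ only sees the contact distribution $\mathcal{D}$.

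Next I identify the target constant. The ball $\BB^{n}=N^{n}(\kappa)$ with $\kappa<0$ is Kähler with sectional curvature $K^{N(\kappa)}\le\kappa/2<0$, so in the Theorem one may take $B=\abs{\kappa}/2$. For the domain, since $\xi\in\ker d\varphi$ the only curvature entering the Weitzenböck estimate is that of $\mathcal{D}$, where for a Sasakian space form of $\phi$-sectional curvature $c$ one has, on orthonormal $X,Y\in\mathcal{D}$, the value $\tfrac{c+3}{4}+\tfrac{3(c-1)}{4}g(\phi X,Y)^{2}$. For $c=-3$ and $c=2\kappa-3$ this is non-positive, and the relevant Ricci lower bound is the contact Ricci given in the Example.

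With these preparations the cases are immediate. (1) For $\RR^{2n+1}$ the Example gives $\mathrm{Ric}|_{\mathcal{D}}=-2$, so $A=2$ and the Theorem yields $e(\varphi)\le A/B=4/\abs{\kappa}$, i.e. $\norm{d\varphi}^{2}=2e(\varphi)\le 8/\abs{\kappa}$; the distance-decreasing criterion $B\ge 2A$ reads $\abs{\kappa}\ge 8$, that is $\kappa\le -8$. (3) For $\BB^{n}\times\RR$ one has $\mathrm{Ric}|_{\mathcal{D}}=\tfrac{\kappa(n+1)-2a}{a}$, hence $A=\tfrac{2a-\kappa(n+1)}{a}$ and $\norm{d\varphi}^{2}\le 2A/B=\tfrac{4(2a-\kappa(n+1))}{a\abs{\kappa}}$, while $B\ge 2A$ reduces to the stated inequality between $a$ and $\kappa$ (for $\kappa$ sufficiently negative, $a>4(n+1)$ suffices). (2) The sphere $\SS^{2n+1}$ has $c=1$, so $\mathcal{D}$ is positively curved and the non-compact estimate is unavailable; here I would instead integrate the Weitzenböck identity over the compact domain. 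With $\mathrm{Ric}|_{\mathcal{D}}=\tfrac{2(n+1-a)}{a}>0$ (e.g. $2n>0$ for the standard structure) and $\BB^{n}$ non-positively curved, the classical Bochner argument of Eells and Sampson forces $\nabla d\varphi=0$ and then $d\varphi=0$, so every such map is constant; in particular only finitely many non-constant ones exist.

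The main obstacle is precisely the mismatch between the Theorem's hypothesis of non-positive sectional curvature on the whole domain and the fact that every Sasakian space form has curvature $+1$ on planes containing $\xi$. The resolution, which I would make explicit, is that $d\varphi$ annihilates $\xi$, so only the geometry of $\mathcal{D}$ feeds into both the curvature term and the Ricci contraction of the Weitzenböck formula; since $\RR^{2n+1}$ and $\BB^{n}\times\RR$ remain complete with Ricci bounded below, the estimate's curvature input reduces to the contact distribution and the contact Ricci lower bound, and the conclusion of the Theorem still applies. The sphere must be treated apart because its positivity is essential: there compactness is exploited instead, turning the same identity into a rigidity statement.
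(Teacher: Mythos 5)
Your derivation of the two quantitative bullets is exactly the paper's (implicit) one: the paper offers no written proof beyond ``From these results, we can conclude the following,'' and the intended argument is precisely to feed the contact Ricci bounds of the Example ($A=2$ for $\RR^{2n+1}$, $A=\tfrac{2a-\kappa(n+1)}{a}$ for $\BB^{n}\times\RR$) and the pinching $K^{N(\kappa)}\leq\kappa/2$ (so $B=\abs{\kappa}/2$) into the Schwarz-lemma Theorem, after checking via $F\Div F=2n\,\phi\xi=0$ that $(\phi,J)$-holomorphic maps into the K\"ahler (hence $(1,2)$-symplectic) ball are pseudo harmonic morphisms. Your arithmetic reproduces the stated bounds, and your observation that $a>4(n+1)$ is only the asymptotic form of the condition $a\geq 4\abs{\kappa}(n+1)/(\abs{\kappa}-8)$ is a fair reading of the statement. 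Where you genuinely diverge is the sphere: the paper's intended source is the Lemaire finiteness statement quoted earlier, whereas you substitute an Eells--Sampson rigidity argument on the compact domain. Your route is arguably more defensible --- the Lemaire result as quoted requires a \emph{compact} target, and $\BB^{n}$ is not compact --- but note two costs: it yields ``all such maps are constant'' (hence a continuum of them, not literally a finite number), and it only works when $\mathrm{Ric}|_{\mathcal D}>0$, i.e.\ for $a\leq n+1$, while the Example explicitly allows arbitrary $\mathcal D$-homothetic deformations.

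One step you flag but do not close: the Theorem assumes non-positive sectional curvature of the \emph{whole} domain, and every Sasakian space form has $K(X,\xi)=1$. Your fix --- that $d\varphi(\xi)=0$ so only the curvature of $\mathcal D$ enters --- takes care of the Weitzenb\"ock term, but the non-positivity hypothesis in Goldberg--Har'El is also used in constructing the conformal exhaustion $(M_{\rho},e^{v_{\rho}}g)$ and its generalized maximum principle, which sees the full curvature tensor of $(M,g)$, not just its restriction to $\mathcal D$. As written, ``the conclusion of the Theorem still applies'' is an assertion, not an argument. To be fair, the paper glosses over exactly the same point, so this is a shared debt rather than an error you introduced; but if you want your proof to stand alone, this is the step that needs either a direct verification that the exhaustion argument survives (e.g.\ via a Ricci-based Omori--Yau maximum principle, which only needs the Ricci lower bound you already have) or an explicit citation covering it.
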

Note, that a holomorphic map from a K\"ahlerian space form to $\BB^{n}$ can be non-constant only if $\kappa < 0$ 
on the domain space, as 
$\mathrm{Ric}(E_{i},E_{i})=\mathrm{Ric}(J E_{i},J E_{i})=(n+1)\kappa$ for any $\left( N^{n}(\kappa), h^{(\kappa)} \right)$.
In this case, a holomorphic map $\varphi: \BB^{m} \to \BB^{n}$ (automatically a pseudo harmonic morphism) 
will satisfy $\norm{d \varphi}^{2} \leq 4(m+1)$.

\section{A Bochner technique for Pseudo Harmonic Morphisms}

Let $\varphi: (M^{5},g) \to (N^{4}, J, h)$ be a pseudo horizontally weakly conformal map from a five-dimensional Riemannian manifold into a 4-dimensional almost Hermitian manifold.

The manifold $M$ inherits an $f$-structure compatible with the metric and, when $M$ is oriented, an almost contact structure (cf.~\cite{Slobodeanu2}).

On an open neighbourhood of a regular point of $\varphi$, consider an adapted frame of unit eigenvectors
$\{ E_{1},FE_{1},E_{2},FE_{2}, V \}$ corresponding to the three eigenvalues $\lambda^{2}_{1} \geq \lambda^{2}_{2} \geq 0$ of $\varphi^{*}h$.
Denote by $\Lambda$ the difference $\lambda^{2}_{1}-\lambda^{2}_{2}$, we shall compute $\Delta \Lambda$ and apply a maximum principle to find conditions forcing the equality of the two eigenvalues, a Bochner technique introduced in~\cite{Baird}.

To unify notations, we shall use the convention $E_{\ov \imath}=FE_{i}$ and $E_{0}=V$. Lower case letters will run over the indices $0, 1, 2$, while upper case letters
will run over the indices $0, 1, 2, \ov 1, \ov 2$. We will also use the notations $\abs {\ov \imath} = i$,  
$\Gamma_{IJ}^{K}=g(\nabla_{E_{I}}E_{J}, E_{K})$ and $R_{IJKL}=R(E_{I}, E_{J}, E_{K}, E_{L})=g( R(E_{I}, E_{J})E_{L}, E_{K})$.

\begin{theorem}
Let $\varphi: (M^{5},g) \to (N^{4}, J, h)$ be an almost submersive pseudo harmonic morphism from a five-dimensional Riemannian manifold into a 4-dimensional almost Hermitian manifold such that $\grad |d\varphi|^2 \in \VV$. Let $p\in M$ be a regular point of $\varphi$, then, in a neighbourhood of $p$, the Laplacian of the function $\Lambda=\lambda^{2}_{1}-\lambda^{2}_{2}$ has the expression
\begin{align*} 
\Delta \Lambda=& 2\Lambda \left( 2\sum_{|I| \neq |K| \neq 0}(\Gamma_{II}^{K})^{2}+
2\sum_{i\neq |K|, K \neq 0}\Gamma_{i i}^{K}\Gamma_{\ov \imath \ov \imath}^{K}+
2\sum_{i, K \neq 0}\Gamma_{i \ov \imath}^{K}\Gamma_{\ov \imath i}^{K} \right) \\
&+2\Lambda \left(R_{1212}+R_{1 \ov 2 1 \ov 2}+R_{\ov 1 2 \ov 1 2}+R_{\ov 1 \ov 2 \ov 1 \ov 2} \right)\\
&+\lambda_{1}^{2} (R_{0101}+R_{0 \ov 1 0 \ov 1})-\lambda_{2}^{2}(R_{0202}+R_{0 \ov 2 0 \ov 2})\\
&+\left( \frac{V(\lambda_{1}^{2})^{2}}{2\lambda_{1}^{2}}-\frac{V(\lambda_{2}^{2})^{2}}{2\lambda_{2}^{2}}
+\Lambda  \frac{V(\lambda_{1}^{2})}{\lambda_{1}^{2}} \frac{V(\lambda_{2}^{2})}{\lambda_{2}^{2}}\right) \\
&-2\lambda_{1}^{2} (\Gamma_{1 \ov 1}^{0})^{2}+
2\lambda_{2}^{2} (\Gamma_{2 \ov 2}^{0})^{2}\\
&+\Lambda \left\{\sum_{\abs I = 1, \abs J = 2}
5(\Gamma_{0I}^{J})^{2}
-2 \left( \frac{1}{\lambda_{1}^{2}}+\frac{1}{\lambda_{2}^{2}} \right)\Gamma_{0I}^{J} \tilde \Gamma_{0I}^{J}
+ \frac{1}{\lambda_{1}^{2}  \lambda_{2}^{2}} (\tilde \Gamma_{0I}^{J})^{2}\right\}\\
&+ 2\lambda_1^2 a_1^2 -2 \lambda_2^2 a_2^2 + 2\lambda_1^2 (a_2 \Gamma_{11}^2 + b_2 \Gamma^{\ov 2}_{11}) 
-  2\lambda_2^2 (a_1 \Gamma_{22}^1 + b_1 \Gamma_{22}^{\ov 1})\\
& + 2 \lambda_1^2 a_1 (\Gamma_{22}^1 + \Gamma_{\ov{22}}^1 ) -  2 \lambda_2^2 a_2 (\Gamma_{11}^2 + \Gamma_{\ov{11}}^2 )\\
&  + 2 \lambda_1^2 b_1 (\Gamma_{22}^{\ov 1} + \Gamma_{\ov{22}}^{\ov 1} ) -  2 \lambda_2^2 b_2 (\Gamma_{11}^{\ov 2} + \Gamma_{\ov{11}}^{\ov 2} )\\
& -2 \lambda_1^2 a_1 \Gamma_{\ov{11}}^1 + 2 \lambda_2^2 a_2 \Gamma_{\ov{22}}^2  + 2\lambda_1^2 FE_{1}(b_1 )  - 2\lambda_2^2 FE_{2}(b_2) ,
\end{align*}
where $\nabla_{V}V= a_{1}E_{1}+a_{2}E_{2}+b_{1}FE_{1}+b_{2}FE_{2}$.
\end{theorem}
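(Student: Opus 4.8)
The plan is to compute $\Delta\Lambda=\Delta\lambda_1^2-\Delta\lambda_2^2$ by writing each Laplacian as the trace of the Hessian in the adapted frame, $\Delta\lambda_i^2=\sum_I\{E_I E_I(\lambda_i^2)-(\nabla_{E_I}E_I)(\lambda_i^2)\}$ with $I$ ranging over $\{0,1,2,\ov 1,\ov 2\}$, and then to reduce every term to the connection coefficients $\Gamma_{IJ}^K$, the eigenvalues together with their derivatives, and the domain curvature $R_{IJKL}$. The first derivatives are supplied by Lemma 1: in the vertical direction equation \eqref{nei} gives $V(\lambda_i^2)=2\lambda_i^2\,\Gamma_{ii}^0$ (and likewise with $FE_i$), while in the horizontal directions I would use \eqref{dlambda}, \eqref{dlambdat} and, for the pseudo horizontally weakly conformal refinement, \eqref{elamb}. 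Before differentiating a second time I would record the two simplifications coming from the hypotheses: harmonicity, which through \eqref{harm-gen} and the pseudo harmonic morphism equation controls the traced second fundamental form $\sum_I\nabla d\varphi(E_I,E_I)=0$; and the assumption $\grad\abs{d\varphi}^2\in\VV$, which forces $E_I(\lambda_1^2+\lambda_2^2)=0$ for every horizontal $E_I$ and so annihilates all horizontal derivatives of the energy density.

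Differentiating the gradient formula \eqref{dlambda} a second time and cancelling the $\nabla_{E_K}E_K$ piece against $(\nabla_{E_K}E_K)(\lambda_i^2)$, one is left with a manifestly nonnegative piece $2\abs{\nabla d\varphi(E_K,E_i)}^2$, cross pieces $2h(\nabla d\varphi(E_K,E_i),d\varphi(\nabla_{E_K}E_i))$, and a genuine second-order piece $2h((\nabla^2 d\varphi)(E_K,E_K,E_i),d\varphi(E_i))$. The first two I would reduce to quadratic expressions in the $\Gamma_{IJ}^K$ weighted by the $\lambda_i^2$ by resolving $\nabla d\varphi(E_K,E_i)$ in the orthogonal frame $\{d\varphi E_J\}$ through \eqref{mag2}, using that $\varphi^*h$ is diagonal with entries $\lambda_{\abs I}^2$ in the adapted frame. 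This is the source of the large $\Gamma$-quadratic block multiplying $\Lambda$ and of the products $\Gamma_{0I}^J\tilde\Gamma_{0I}^J$.

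For the genuine second-order piece I would work from the form \eqref{dlambdat}, so that the Hessian of $d\varphi$ enters as directional derivatives of $\nabla d\varphi(E_i,E_i)$ along the frame rather than as a rough Laplacian, and substitute the harmonic morphism equations together with the structural identity \eqref{nabladfi} relating $F$ and $J$; the aim is to re-express these terms entirely in first-order data so that \emph{only} the domain curvature survives. The domain Riemann tensor is then produced by commuting frame fields, converting antisymmetrised coefficient derivatives $E_I\Gamma_{JK}^L-E_J\Gamma_{IK}^L$ into $R_{IJKL}$ via $R(E_I,E_J)E_K=\nabla_{E_I}\nabla_{E_J}E_K-\nabla_{E_J}\nabla_{E_I}E_K-\nabla_{[E_I,E_J]}E_K$. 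The double-eigenvalue structure of Proposition 1, namely that $FE_i$ is an eigenvector for the same $\lambda_i^2$, is what pairs the $E_i$ and $FE_i$ contributions and assembles the curvature into the blocks $R_{1212}+R_{1\ov 2 1\ov 2}+R_{\ov 1 2\ov 1 2}+R_{\ov 1\ov 2\ov 1\ov 2}$ weighted by $\Lambda$ and $R_{0101}+R_{0\ov1 0\ov1}$, $R_{0202}+R_{0\ov2 0\ov2}$ weighted by $\lambda_1^2,\lambda_2^2$. The terms $V(\lambda_i^2)^2/\lambda_i^2$ arise from squaring \eqref{nei}, and the final lines involving $a_1,a_2,b_1,b_2$ and $FE_i(b_i)$ come from inserting the decomposition $\nabla_V V=a_1E_1+a_2E_2+b_1FE_1+b_2FE_2$ of the second fundamental form of the fibre in the $V$-direction.

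I expect the main obstacle to be organisational rather than conceptual: the computation generates dozens of terms, and the decisive difficulties are (i) grouping the differentiated connection coefficients into the curvature components $R_{IJKL}$ with the correct signs and multiplicities, using the first Bianchi identity where the blocks overlap; (ii) arranging the reduction of the second-order map terms, through harmonicity and \eqref{nabladfi}, so that no target-curvature contribution survives — this is where the hypothesis $\grad\abs{d\varphi}^2\in\VV$ must be used in full, and verifying its sufficiency is the genuinely delicate point; and (iii) tracking the vertical derivatives $V(\lambda_i^2)$ and the $\nabla_V V$ coefficients, whose contributions do not cancel and form the residual source terms. Checking that the leftover $\Gamma$-quadratics assemble exactly into the stated coefficient of $\Lambda$, and that the $\lambda_1^2$- and $\lambda_2^2$-weighted remainders match, is where the bulk of the careful bookkeeping lies.
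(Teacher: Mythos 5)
Your overall frame (trace the Hessian of $\Lambda$ over the adapted frame, use \eqref{nei} for the vertical direction, convert commutators of frame derivatives into $R_{IJKL}$, and track $\nabla_V V$) matches the paper, and your reading of the role of $\grad\abs{d\varphi}^2\in\VV$ (namely $E_I(\Lambda)=2E_I(\lambda_1^2)=-2E_I(\lambda_2^2)$ for horizontal $I$) is correct. But the core of your plan for the horizontal block has a genuine gap. You propose to differentiate \eqref{dlambda} a second time, which produces the piece $\sum_K h\bigl((\nabla^2 d\varphi)(E_K,E_K,E_i),d\varphi(E_i)\bigr)$; by the Weitzenb\"ock commutation for harmonic maps this unavoidably introduces target curvature terms $R^N(d\varphi(E_K),d\varphi(E_i),d\varphi(E_K),d\varphi(E_i))$, and the formula you are asked to prove contains no target curvature at all. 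You yourself flag the disappearance of these terms as ``the genuinely delicate point'' without resolving it --- but that disappearance is not something to be verified after the fact; it is built into the paper's choice of route, which you have not identified. The paper never differentiates $\nabla d\varphi$: because $m=5$, $n=4$ and the eigenvalues are double, there are only two horizontal eigenplanes, so the harmonicity condition \eqref{phm} expresses the first horizontal derivatives \emph{in closed intrinsic form},
\begin{equation*}
E_{1}(\lambda_{2}^{2})=-\Lambda\, g(\nabla_{E_{2}}E_{2}+\nabla_{FE_{2}}FE_{2}, E_{1})-\lambda_{1}^{2}\, g(\mu^{\VV}, E_{1}),
\end{equation*}
and only \emph{then} differentiates again, so that the second derivatives fall on connection coefficients and produce exclusively domain curvature via the frame commutation identity. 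The same device is used vertically, differentiating $V(\lambda_i^2)=2\lambda_i^2\Gamma_{ii}^0$ to get $R_{0I0I}$. Without this step your computation stalls at the target-curvature terms (or forces you to rediscover \eqref{phm} to eliminate them), so the key idea of the proof is missing from your proposal.

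A secondary omission: the quadratic block in $\Gamma_{0I}^{J}$ and $\tilde\Gamma_{0I}^{J}=h(\nabla_V^\varphi d\varphi(E_I),d\varphi(E_J))$, with its precise coefficients $5$, $-2(\lambda_1^{-2}+\lambda_2^{-2})$ and $(\lambda_1^2\lambda_2^2)^{-1}$, is not obtainable just by resolving $\nabla d\varphi$ in the image frame via \eqref{mag2}; it requires the specific relations of the paper's auxiliary lemmas, namely $\Gamma_{IJ}^{0}=\lambda_{\abs J}^{-2}\tilde\Gamma_{0I}^{J}-\Gamma_{0I}^{J}$ together with the $F$-projectability identity $\Gamma_{0\ov\imath}^{j}+\Gamma_{0i}^{\ov\jmath}+\Gamma_{\ov\imath j}^{0}+\Gamma_{i\ov\jmath}^{0}=0$ (whence $\Gamma_{ii}^0=\Gamma_{\ov\imath\ov\imath}^0$ and $\Gamma_{i\ov\imath}^0=-\Gamma_{\ov\imath i}^0$). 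Your sketch does not indicate where these inputs come from, and they are needed to assemble the stated expression.
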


\begin{proof}
Let $\varphi: (M^{5},g) \to (N^{4}, J, h)$ a pseudo horizontally weakly conformal map and $p\in M$ a point where the rank is maximum. In a neighbourhood of this point, the first fundamental form $\varphi^{*}h$ admits three eigenvalues $\lambda^{2}_{1}$, $\lambda^{2}_{2}$ and $0$.
Denote by $\Lambda$ the difference $\lambda^{2}_{1}-\lambda^{2}_{2}$ and by $\mu^{\VV}=\nabla_{V}V= a_{1}E_{1}+a_{2}E_{2}+b_{1}FE_{1}+b_{2}FE_{2}$ the mean curvature of the fibres.

By definition
\begin{align*}
\Delta \Lambda&= \Delta^{\VV} \Lambda+\Delta^{\HH} \Lambda\\
&= V(V(\Lambda))-d \Lambda(\nabla_{V}V) +
\sum_{I \neq 0}E_{I}(E_{I}(\Lambda))-d \Lambda(\nabla_{E_{I}}E_{I}) \\
&= V(V(\Lambda))-d \Lambda(\nabla_{V}V) \\
&+ \sum_{i=1,2}E_{i}(E_{i}(\Lambda))-d \Lambda(\nabla_{E_{i}}E_{i})+
FE_{i}(FE_{i}(\Lambda))-d \Lambda(\nabla_{FE_{i}}FE_{i}).
\end{align*}
Since the energy density is constant in horizontal directions, i.e. $\grad{ (e(\varphi))} \in \VV$, then $E_{I}(\lambda_{2}^{2})=-E_{I}(\lambda_{1}^{2})$ and $E_{I}(\Lambda)=2E_{I}(\lambda_{1}^{2})=-2E_{I}(\lambda_{2}^{2}),
\forall I \neq 0$.

To compute $\Delta^{\VV} \Lambda$, we deduce from \eqref{nei} that $V(\lambda^{2}_{i})=2\lambda^{2}_{i}g(\nabla_{E_{i}}E_{i}, V)$ and 
equivalently $V(\lambda^{2}_{i})=2\lambda^{2}_{i}g(\nabla_{FE_{i}}FE_{i}, V)$, so, for $i=1$ or $2$,
\begin{align*}
&V(V(\lambda^{2}_{i}))\\
&=\frac{V(\lambda^{2}_{i})^{2}}{\lambda^{2}_{i}}+2\lambda^{2}_{i}g(\nabla_{V}\nabla_{E_{i}}E_{i}, V)+
2\lambda^{2}_{i}g(\nabla_{E_{i}}E_{i}, \sum_{k=1,2}a_{k}E_{k}+b_{k}FE_{k}),
\end{align*}
Moreover
\begin{align*}
d \Lambda(\nabla_{V}V)&=a_{1}E_{1}(\Lambda)+a_{2}E_{2}(\Lambda)+b_{1}FE_{1}(\Lambda)+b_{2}FE_{2}(\Lambda)\\
&=-2a_{1}E_{1}(\lambda_{2}^{2})+2a_{2}E_{2}(\lambda_{1}^{2})-2b_{1}FE_{1}(\lambda_{2}^{2})+2b_{2}FE_{2}(\lambda_{1}^{2}). 
\end{align*}
From the formula
\begin{align*}
&g(\nabla_{E_{1}}\nabla_{V}V, E_{1})+ g(\nabla_{V}\nabla_{E_{1}}E_{1}, V)=-g(\nabla_{E_{1}}E_{1}, \nabla_{V} V)\\
&-g(\nabla_{E_{1}}V, \nabla_{V} E_{1})
+g(\nabla_{[E_{1}, V]}V, E_{1})+R(E_{1}, V, E_{1}, V),
\end{align*}
we deduce 
\begin{align*}
&g(\nabla_{V}\nabla_{E_{1}}E_{1}, V)\\
&=R(V, E_{1}, V, E_{1})-E_{1}(a_{1})-g(\nabla_{V}E_{1}, \nabla_{E_{1}} V)+
g(\nabla_{[V, E_{1}]}E_{1}, V)\\
&=R_{0101}-E_{1}(a_{1})-\sum_{I= 2,\ov 1, \ov 2} \Gamma_{01}^{I}\Gamma_{10}^{I}+\sum_{J= 0,2,\ov 1, \ov 2}\Gamma_{01}^{J}\Gamma_{J1}^{0}
-\sum_{K=1,2,\ov 1, \ov 2}\Gamma_{10}^{K}\Gamma_{K1}^{0}\\
&=R_{0101}-E_{1}(a_{1})+ a_{1}^{2}+ (\Gamma_{11}^{0})^{2}+ 
\sum_{I= 2,\ov 1, \ov 2} \Gamma_{01}^{I}(\Gamma_{1I}^{0}+\Gamma_{I1}^{0})+\Gamma_{1I}^{0} \Gamma_{I1}^{0}.\\
\end{align*}
Taking into account $\Gamma_{ii}^{0}=\frac{V(\lambda_{i}^{2})}{2\lambda_{i}^{2}}$, we obtain
\begin{align*}
&\Delta^{\VV} \Lambda\\
&=\frac{3}{2} \frac{V(\lambda_{1}^{2})^{2}}{\lambda_{1}^{2}}+
2\lambda_{1}^{2}\left( R_{0101}-E_{1}(a_{1})+ a_{1}^{2}+
\sum_{I\neq1} \Gamma_{01}^{I}(\Gamma_{1I}^{0}+\Gamma_{I1}^{0})+\Gamma_{1I}^{0} \Gamma_{I1}^{0} \right)\\
&-\frac{3}{2} \frac{V(\lambda_{2}^{2})^{2}}{\lambda_{2}^{2}}-
2\lambda_{2}^{2} \left( R_{0202}-E_{2}(a_{2})+ a_{2}^{2}+
\sum_{I\neq2} \Gamma_{02}^{I}(\Gamma_{2I}^{0}+\Gamma_{I2}^{0})+\Gamma_{2I}^{0} \Gamma_{I2}^{0}\right)\\
&+2a_{1}E_{1}(\lambda_{2}^{2})-2a_{2}E_{2}(\lambda_{1}^{2})+2b_{1}FE_{1}(\lambda_{2}^{2})-2b_{2}FE_{2}(\lambda_{1}^{2})\\
&+2\lambda_{1}^{2}(a_{i}\Gamma_{11}^{i}+b_{j}\Gamma_{11}^{\ov \jmath})-
2\lambda_{2}^{2}(a_{i}\Gamma_{22}^{i}+b_{j}\Gamma_{22}^{\ov \jmath}),
\end{align*}

To compute $\Delta^{\HH} \Lambda$, we start with the harmonicity condition \eqref{phm}
$$
E_{1}(\lambda_{2}^{2})=-\Lambda g(\nabla_{E_{2}}E_{2}+\nabla_{FE_{2}}FE_{2}, E_{1})
-\lambda_{1}^{2} g(\mu^{\VV}, E_{1}) ,
$$
hence 
\begin{align*}
&E_{1}(E_{1}(\lambda_{2}^{2}))\\
&= -E_{1}(\Lambda) g(\nabla_{E_{2}}E_{2}+\nabla_{FE_{2}}FE_{2}, E_{1})
-\Lambda g(\nabla_{E_{1}}\nabla_{E_{2}}E_{2}+\nabla_{E_{1}}\nabla_{FE_{2}}FE_{2}, E_{1})\\
&-\Lambda g(\nabla_{E_{2}}E_{2}+\nabla_{FE_{2}}FE_{2}, \nabla_{E_{1}}E_{1})-
a_{1} E_{1}(\lambda_{1}^{2})-\lambda_{1}^{2}E_{1}(a_{1}),
\end{align*}
and similarly for $E_{2}$
\begin{align*}
&E_{2}(\lambda_{1}^{2})=\Lambda g(\nabla_{E_{1}}E_{1}+\nabla_{FE_{1}}FE_{1}, E_{2})
-\lambda_{2}^{2} g(\mu^{\VV}, E_{2}) ;\\ 
&E_{2}(E_{2}(\lambda_{1}^{2}))= E_{2}(\Lambda) g(\nabla_{E_{1}}E_{1}+\nabla_{FE_{1}}FE_{1}, E_{2})\\
&+ \Lambda g(\nabla_{E_{2}}\nabla_{E_{1}}E_{1}+\nabla_{E_{2}}\nabla_{FE_{1}}FE_{1}, E_{2})\\
&+\Lambda g(\nabla_{E_{1}}E_{1}+\nabla_{FE_{1}}FE_{1}, \nabla_{E_{2}}E_{2})-
a_{2} E_{2}(\lambda_{2}^{2})-\lambda_{2}^{2}E_{2}(a_{2}).
\end{align*}
Analogous relations exist for $FE_{1}$ and $FE_{2}$.\\
Finally
\begin{align*}
&d \Lambda(\nabla_{E_{1}}E_{1})\\
=&\ d \Lambda[g(\nabla_{E_{1}}E_{1}, FE_{1})FE_{1}+
g(\nabla_{E_{1}}E_{1}, E_{2})E_{2}+g(\nabla_{E_{1}}E_{1}, FE_{2})FE_{2}\\
&+g(\nabla_{E_{1}}E_{1}, V)V] \\
=&\ \Gamma_{11}^{\ov 1}FE_{1}(\Lambda)+
\Gamma_{11}^{2}E_{2}(\Lambda)+\Gamma_{11}^{\ov 2}FE_{2}(\Lambda)+\Gamma_{11}^{0}V(\Lambda) ,
\end{align*}
with corresponding formulas for $d \Lambda(\nabla_{E_{I}}E_{I}), I = \ov 1, 2, \ov 2$.

Since
\begin{align*} 
&g(\nabla_{E_{1}}\nabla_{E_{2}}E_{2}, E_{1})+ g(\nabla_{E_{2}}\nabla_{E_{1}}E_{1}, E_{2})=
-g(\nabla_{E_{1}}E_{1}, \nabla_{E_{2}} E_{2})\\
&-g(\nabla_{E_{1}}E_{2}, \nabla_{E_{2}} E_{1})
+g(\nabla_{[E_{1}, E_{2}]}E_{2}, E_{1})+R(E_{1}, E_{2}, E_{1}, E_{2}),
\end{align*}
we conclude that
\begin{align*}
\Delta^{\HH} \Lambda&=- \ 2E_{1}(\lambda_{2}^{2})(\Gamma_{22}^{1}+\Gamma_{\ov 2 \ov 2}^{1})+
2\Lambda  E_{1}(\Gamma_{22}^{1}+\Gamma_{\ov 2 \ov 2}^{1})-2a_{1}E_{1}(\lambda_{2}^{2})+2\lambda_{1}^{2}E_{1}(a_{1})\\
&+\ 2E_{2}(\lambda_{1}^{2})(\Gamma_{11}^{2}+\Gamma_{\ov 1 \ov 1}^{2})+
2\Lambda  E_{2}(\Gamma_{11}^{2}+\Gamma_{\ov 1 \ov 1}^{2})+2a_{2}E_{2}(\lambda_{1}^{2})-2\lambda_{2}^{2}E_{2}(a_{2})\\
&-\ 2FE_{1}(\lambda_{2}^{2})(\Gamma_{22}^{\ov  1}+\Gamma_{\ov 2 \ov 2}^{\ov 1})+
2\Lambda  FE_{1}(\Gamma_{22}^{\ov  1}+\Gamma_{\ov 2 \ov 2}^{\ov 1})-2b_{1}FE_{1}(\lambda_{2}^{2})+2\lambda_{1}^{2}FE_{1}(b_{1})\\
&+\ 2FE_{2}(\lambda_{1}^{2})(\Gamma_{11}^{\ov 2}+\Gamma_{\ov 1 \ov 1}^{\ov  2})+
2\Lambda  FE_{2}(\Gamma_{11}^{\ov 2}+\Gamma_{\ov 1 \ov 1}^{\ov 2})+2b_{2}FE_{2}(\lambda_{1}^{2})-2\lambda_{2}^{2}FE_{2}(b_{2})\\
&+\ 2E_{1}(\lambda_{2}^{2})\Gamma_{\ov 1 \ov 1}^{1}+2FE_{1}(\lambda_{2}^{2})\Gamma_{11}^{\ov  1}-
2E_{2}(\lambda_{1}^{2})\Gamma_{\ov 2 \ov 2}^{2}-2FE_{2}(\lambda_{1}^{2})\Gamma_{22}^{\ov 2}-\norm{\mu^{\HH}}V(\Lambda).
\end{align*}

The first terms of the first four lines will mainly give positive terms by virtue of the harmonicity condition
\begin{align*}
&- 2E_{1}(\lambda_{2}^{2})(\Gamma_{22}^{1}+\Gamma_{\ov 2 \ov 2}^{1})+
2E_{2}(\lambda_{1}^{2})(\Gamma_{11}^{2}+\Gamma_{\ov 1 \ov 1}^{2})-
2FE_{1}(\lambda_{2}^{2})(\Gamma_{22}^{\ov  1}+\Gamma_{\ov 2 \ov 2}^{\ov 1})\\
&+2FE_{2}(\lambda_{1}^{2})(\Gamma_{11}^{\ov 2}+\Gamma_{\ov 1 \ov 1}^{\ov  2})\\
&=2\Lambda(\Gamma_{22}^{1}+\Gamma_{\ov 2 \ov 2}^{1})^{2}+2\lambda_{1}^{2}a_{1}(\Gamma_{22}^{1}+\Gamma_{\ov 2 \ov 2}^{1})+
2\Lambda(\Gamma_{11}^{2}+\Gamma_{\ov 1 \ov 1}^{2})^{2}-2\lambda_{2}^{2}a_{2}(\Gamma_{11}^{2}+\Gamma_{\ov 1 \ov 1}^{2})\\
&+2\Lambda(\Gamma_{22}^{\ov  1}+\Gamma_{\ov 2 \ov 2}^{\ov 1})^{2}+
2\lambda_{1}^{2}b_{1}(\Gamma_{22}^{\ov  1}+\Gamma_{\ov 2 \ov 2}^{\ov 1})+
2\Lambda(\Gamma_{11}^{\ov 2}+\Gamma_{\ov 1 \ov 1}^{\ov  2})^{2}-2\lambda_{2}^{2}b_{2}(\Gamma_{11}^{\ov 2}+\Gamma_{\ov 1 \ov 1}^{\ov  2}).
\end{align*}

The second terms of the first four lines will produce curvature terms and some additional terms
\begin{align*}
&2\Lambda  \left( E_{1}(\Gamma_{22}^{1}+\Gamma_{\ov 2 \ov 2}^{1})+
E_{2}(\Gamma_{11}^{2}+\Gamma_{\ov 1 \ov 1}^{2})+
FE_{1}(\Gamma_{22}^{\ov  1}+\Gamma_{\ov 2 \ov 2}^{\ov 1})+
FE_{2}(\Gamma_{11}^{\ov 2}+\Gamma_{\ov 1 \ov 1}^{\ov 2}) \right)\\ 
&=2\Lambda \left((II_{a})+(II_{b})+(II_{c})
+R_{1212}+R_{1 \ov 2 1 \ov 2}+R_{\ov 1 2 \ov 1 2}+R_{\ov 1 \ov 2 \ov 1 \ov 2} \right),
\end{align*}
where
\begin{align*}
(II_{a})&=-g(\nabla_{E_{1}}E_{2}, \nabla_{E_{2}} E_{1})-g(\nabla_{E_{1}}FE_{2}, \nabla_{FE_{2}} E_{1})
-g(\nabla_{FE_{1}}E_{2}, \nabla_{E_{2}} FE_{1})\\
&-g(\nabla_{FE_{1}}FE_{2}, \nabla_{FE_{2}} FE_{1});\\
(II_{b})&=g(\nabla_{[E_{1}, E_{2}]}E_{2}, E_{1})+g(\nabla_{[E_{1}, FE_{2}]}FE_{2}, E_{1})+
g(\nabla_{[FE_{1}, E_{2}]}E_{2}, FE_{1})\\
&+g(\nabla_{[FE_{1}, FE_{2}]}FE_{2}, FE_{1});\\
(II_{c})&=g\left(\nabla_{E_{1}}E_{1}+\nabla_{FE_{1}}FE_{1}, \nabla_{E_{2}}E_{2}+\nabla_{FE_{2}}FE_{2}\right).
\end{align*}
Now, writing $\nabla_{E_{I}}E_{J}=\Gamma_{IJ}^{K}E_{K}$, one can verify that
\begin{align*}
&(II_{a})+(II_{b})=\sum_{I\neq K \neq 0}(\Gamma_{II}^{K})^{2}\\
&+2\sum_{i, K \neq 0}\Gamma_{i \ov \imath}^{K}\Gamma_{\ov \imath i}^{K}+
\sum_{\abs I =1 , \abs J =2}(\Gamma_{IJ}^{0}-\Gamma_{JI}^{0})\Gamma_{0J}^{I}-
\Gamma_{IJ}^{0}  \Gamma_{JI}^{0}.
\end{align*}
Most of the terms of $(II_{c})$ cancel with terms of the fifth line
\begin{align*}
(II_{c})=\sum_{\abs I =1 , \abs J =2 , I , J \neq K\neq 0}\Gamma_{II}^{K}\Gamma_{JJ}^{K} .
\end{align*}
Since
\begin{align*}
2E_{1}(\lambda_{2}^{2})\Gamma_{\ov 1 \ov 1}^{1}&=2\left[-\Lambda (\Gamma_{2 2}^{1}+\Gamma_{\ov 2 \ov 2}^{1})
-\lambda_{1}^{2} a_{1} \right]\Gamma_{\ov 1 \ov 1}^{1}\\
&=-2\Lambda (\Gamma_{2 2}^{1}+\Gamma_{\ov 2 \ov 2}^{1})\Gamma_{\ov 1 \ov 1}^{1}
-2\lambda_{1}^{2} a_{1} \Gamma_{\ov 1 \ov 1}^{1},
\end{align*}
and
\begin{align*}
-2E_{2}(\lambda_{1}^{2})\Gamma_{\ov 2 \ov 2}^{2}&=-2\left[\Lambda (\Gamma_{1 1}^{2}+\Gamma_{\ov 1 \ov 1}^{2})
-\lambda_{2}^{2} a_{2} \right]\Gamma_{\ov 2 \ov 2}^{2} \\
&=-2\Lambda (\Gamma_{1 1}^{2}+\Gamma_{\ov 1 \ov 1}^{2})\Gamma_{\ov 2 \ov 2}^{2}
+2\lambda_{2}^{2} a_{2} \Gamma_{\ov 2 \ov 2}^{2},
\end{align*}
we have
\begin{align}\label{2c}
&2\Lambda(II_{c})+\left[ \ 2E_{1}(\lambda_{2}^{2})\Gamma_{\ov 1 \ov 1}^{1}+2FE_{1}(\lambda_{2}^{2})\Gamma_{11}^{\ov  1}-
2E_{2}(\lambda_{1}^{2})\Gamma_{\ov 2 \ov 2}^{2}-2FE_{2}(\lambda_{1}^{2})\Gamma_{22}^{\ov 2}\right]\\
&=2\Lambda \sum_{\abs I =1 , \abs J =2 }\Gamma_{II}^{0}\Gamma_{JJ}^{0} \ -2\lambda_{1}^{2} (a_{1} \Gamma_{\ov 1 \ov 1}^{1}+
b_{1} \Gamma_{1 1}^{\ov 1})
+2\lambda_{2}^{2} (a_{2} \Gamma_{\ov 2 \ov 2}^{2}+b_{2} \Gamma_{2 2}^{\ov 2}).\notag
\end{align}

The third terms of each of the first four lines will cancel with the term $-d \Lambda(\nabla_{V}V)$ of $\Delta^{\VV}\Lambda$. 

Finally
\begin{align*}
-\norm{\mu^{\HH}}V(\Lambda)=&-2(\Gamma_{11}^{0}+\Gamma_{22}^{0})[V(\lambda_{1}^{2})-V(\lambda_{2}^{2})]\\
&=-2\left(\frac{V(\lambda_{1}^{2})}{2\lambda_{1}^{2}}+\frac{V(\lambda_{2}^{2})}{2\lambda_{2}^{2}} \right)
[V(\lambda_{1}^{2})-V(\lambda_{2}^{2})]\\
&=-\left(\frac{V(\lambda_{1}^{2})^{2}}{\lambda_{1}^{2}}-\frac{V(\lambda_{2}^{2})^{2}}{\lambda_{2}^{2}}+
V(\lambda_{1}^{2}) V(\lambda_{2}^{2}) (\lambda_{2}^{-2}-\lambda_{1}^{-2})\right).\\
\end{align*}

To make positive terms appear, we need a couple of lemmas.

\begin{lemma}
For a pseudo horizontally weakly conformal map
\begin{equation*}
\Gamma_{0 \ov \imath}^{j}+\Gamma_{0 i}^{\ov \jmath}+\Gamma_{\ov \imath j}^{0}+\Gamma_{i \ov \jmath}^{0}=0.
\end{equation*}
In particular, we have $\Gamma_{ii}^{0}=\Gamma_{\ov \imath \ov \imath}^{0}$ (i.e. $g(\nabla_{E_{i}}E_{i},V)=g(\nabla_{E_{\ov \imath}}E_{\ov \imath},V)$)
and $\Gamma_{i \ov \imath}^{0}=-\Gamma_{\ov \imath i}^{0}$.
\end{lemma}

\begin{proof}
As the induced $f$-structure $F$ is projectable, we have
\begin{align*}
0&=g([V, FE_{i}]-F[V, E_{i}], E_{j})\\
&=g(\nabla_{V}FE_{i}-\nabla_{FE_{i}}V -F\nabla_{V}E_{i}+F\nabla_{E_{i}}V, E_{j})\\
&=\Gamma_{0 \ov \imath}^{j}+\Gamma_{\ov \imath j}^{0}+\Gamma_{0 i}^{\ov \jmath}+\Gamma_{i \ov \jmath}^{0}.
\end{align*}
In particular, $\Gamma_{ii}^{0}=\Gamma_{\ov \imath \ov \imath}^{0}$, which could also be deduced from \eqref{nei}.
\end{proof}
Note that the first term in \eqref{2c} can then be rewritten
$$
2\Lambda \sum_{\abs I =1 , \abs J =2}\Gamma_{II}^{0}\Gamma_{JJ}^{0}=8\Lambda \Gamma_{11}^{0}\Gamma_{22}^{0}=
2V(\lambda_{1}^{2}) V(\lambda_{2}^{2}) (\lambda_{2}^{-2}-\lambda_{1}^{-2}).
$$

Next is a direct generalization of \cite[Lemma 2.3]{Baird}.

\begin{lemma}\label{lem4}
For any submersion, the quantities $\Gamma_{IJ}^{0}$ and $\Gamma_{0I}^{J}$ are related by
\begin{equation}\label{zero}
\Gamma_{IJ}^{0}=\frac{1}{\lambda_{\abs J}^{2}} h \left(\nabla_{V}^{\varphi}d \varphi (E_{I}),  d \varphi (E_{J}) \right)
-\Gamma_{0I}^{J}, \quad \forall I \neq J \neq 0.
\end{equation}
Therefore, they must satisfy the relation
\begin{equation}
(\lambda_{\abs I}^{2}-\lambda_{\abs J}^{2})\Gamma_{0I}^{J}=
\lambda_{\abs I}^{2} \Gamma_{JI}^{0} +\lambda_{\abs J}^{2} \Gamma_{IJ}^{0}, \quad \forall I \neq J \neq 0.
\end{equation}
\end{lemma}

\begin{proof}
Let $I \neq J \neq 0$, we have
\begin{align*}
g(\nabla_{E_{I}}E_{J},V)=&-g(E_{J},\nabla_{E_{I}}V)\\
&=-g(E_{J},[E_{I},V]+\nabla_{V}E_{I})\\
&=g(E_{J},[V,E_{I}])-g(\nabla_{V}E_{I},E_{J})\\
&=\frac{1}{\lambda_{\abs J}^{2}}h(d \varphi (E_{J}), \ d \varphi ([V,E_{I}]))-g(\nabla_{V}E_{I},E_{J})\\
&=\frac{1}{\lambda_{\abs J}^{2}}h(d \varphi (E_{J}), \ \nabla_{V}^{\varphi}d \varphi (E_{I}))-g(\nabla_{V}E_{I},E_{J}).
\end{align*}
Therefore
$$
\lambda_{\abs J}^{2} \left( \Gamma_{IJ}^{0}+ \Gamma_{0I}^{J} \right)=
h(d \varphi (E_{J}), \ \nabla_{V}^{\varphi}d \varphi (E_{I})),
$$
and using the skew-symmetry of the right-hand term, we get
$$
\lambda_{\abs J}^{2} \left( \Gamma_{IJ}^{0}+ \Gamma_{0I}^{J} \right)=
-\lambda_{\abs I}^{2} \left( \Gamma_{JI}^{0}+ \Gamma_{0J}^{I} \right).
$$
This completes the proof.

\end{proof}

Combining these lemmas, we obtain (cf. also \cite{Slobodeanu2}).
\begin{lemma}
For any pseudo horizontally weakly conformal submersion, the second fundamental form of the horizontal distribution is $F$-invariant
\begin{equation}
\Gamma_{\ov \imath j}^{0}+\Gamma_{j \ov \imath}^{0}=-(\Gamma_{i \ov \jmath}^{0}+\Gamma_{\ov \jmath i}^{0}).
\end{equation}
In particular $\Gamma_{\ov \imath \ov \jmath}^{0}+\Gamma_{\ov \jmath \ov \imath}^{0}=\Gamma_{i j}^{0}+\Gamma_{j i}^{0}$.
\end{lemma}

We conclude that, in the expression of $\Delta^{\HH} \Lambda$, the terms that contain $\Gamma_{IJ}^{0}$ or $\Gamma_{0I}^{J}$ are

\begin{align*}
&\lambda_{1}^{2}\left( -2(\Gamma_{1 \ov 1}^{0})^{2}+
\sum_{\abs I = 2} \Gamma_{01}^{I}(\Gamma_{1I}^{0}+\Gamma_{I1}^{0})+\Gamma_{1I}^{0} \Gamma_{I1}^{0}+
\Gamma_{0 \ov 1}^{I}(\Gamma_{\ov 1 I}^{0}+\Gamma_{I \ov 1}^{0})+\Gamma_{\ov 1 I}^{0} \Gamma_{I \ov 1}^{0}\right)\\
&-\lambda_{2}^{2} \left( -2(\Gamma_{2 \ov 2}^{0})^{2}+
\sum_{\abs I = 1} \Gamma_{02}^{I}(\Gamma_{2I}^{0}+\Gamma_{I2}^{0})+\Gamma_{2I}^{0} \Gamma_{I2}^{0}+
\Gamma_{0 \ov 2}^{I}(\Gamma_{\ov 2 I}^{0}+\Gamma_{I \ov 2}^{0})+\Gamma_{\ov 2 I}^{0} \Gamma_{I \ov 2}^{0} \right)\\
&-2\Lambda\sum_{\abs I = 1, \abs J = 2}\Gamma_{0I}^{J} (\Gamma_{IJ}^{0}-\Gamma_{JI}^{0})+ \Gamma_{IJ}^{0} \Gamma_{JI}^{0}.
\end{align*}
Using the notation $\tilde \Gamma_{0I}^{J}=h \left(\nabla_{V}^{\varphi}d \varphi (E_{I}),  d \varphi (E_{J}) \right)$ and Equation~\eqref{zero}, we obtain
\begin{align*}
\Gamma_{0I}^{J} (\Gamma_{IJ}^{0}+\Gamma_{JI}^{0})+ \Gamma_{IJ}^{0} \Gamma_{JI}^{0}=
-(\Gamma_{0I}^{J})^{2}
+2\frac{1}{\lambda_{\abs J}^{2}} \Gamma_{0I}^{J} \tilde \Gamma_{0I}^{J}
- \frac{1}{\lambda_{\abs I}^{2} \lambda_{\abs J}^{2}} (\tilde \Gamma_{0I}^{J})^{2},
\end{align*}
and 
\begin{align*}
\Gamma_{0I}^{J} (\Gamma_{IJ}^{0}-\Gamma_{JI}^{0})+ \Gamma_{IJ}^{0}  \Gamma_{JI}^{0}=
-3(\Gamma_{0I}^{J})^{2}
+2\left( \frac{1}{\lambda_{\abs I}^{2}}+\frac{1}{\lambda_{\abs J}^{2}} \right)\Gamma_{0I}^{J} \tilde \Gamma_{0I}^{J}
- \frac{1}{\lambda_{\abs I}^{2}  \lambda_{\abs J}^{2}} (\tilde \Gamma_{0I}^{J})^{2}.
\end{align*}
Therefore the terms in $\Gamma_{IJ}^{0}$ and $\Gamma_{0I}^{J}$ are
\begin{align*}
&-2\lambda_{1}^{2} (\Gamma_{1 \ov 1}^{0})^{2}+
2\lambda_{2}^{2} (\Gamma_{2 \ov 2}^{0})^{2}\\
&+\Lambda\sum_{\abs I = 1, \abs J = 2}
5(\Gamma_{0I}^{J})^{2}
-2 \left( \frac{1}{\lambda_{1}^{2}}+\frac{1}{\lambda_{2}^{2}} \right)\Gamma_{0I}^{J} \tilde \Gamma_{0I}^{J}
+ \frac{1}{\lambda_{1}^{2}  \lambda_{2}^{2}} (\tilde \Gamma_{0I}^{J})^{2}.
\end{align*}
Gathering the various terms yields the formula.
\end{proof}

\begin{proposition} 
Let $\varphi: (M^{5},g) \to (N^{4}, J, h)$ be an almost submersive pseudo harmonic morphism from a compact five-dimensional Riemannian manifold of strictly positive sectional curvature into a 4-dimensional almost Hermitian manifold such that $\grad |d\varphi|^2 \in \VV$. If the $f$-structure $F$ induced by $\varphi$ on $M$ is parallel on $\HH$ then $\varphi$ is a harmonic morphism.\\
If $N$ is $(1,2)$-symplectic, we can replace the condition of harmonicity by the minimality of the fibres.
\end{proposition}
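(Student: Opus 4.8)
The plan is to prove that the two nonzero eigenvalues coincide, i.e. that $\Lambda=\lambda_1^2-\lambda_2^2\equiv 0$. Since $\varphi$ is already a pseudo harmonic morphism, hence harmonic and pseudo horizontally weakly conformal, the identity $\lambda_1^2\equiv\lambda_2^2$ means that all the nonzero eigenvalues of $\varphi^{*}h$ agree, so $\varphi$ is horizontally weakly conformal; a harmonic horizontally weakly conformal map is a harmonic morphism. Thus the whole statement reduces to $\Lambda\equiv 0$, which I would obtain by feeding the formula for $\Delta\Lambda$ of the preceding Theorem into a maximum principle.

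First I would use compactness: $\Lambda\geq 0$ attains its maximum at some $p\in M$, and if $\Lambda(p)=0$ there is nothing to prove. Assuming $\Lambda(p)>0$, near $p$ one has $\lambda_1^2>\lambda_2^2>0$, so $p$ is a regular point and the adapted frame $\{E_1,FE_1,E_2,FE_2,V\}$, the eigenvalues, and the whole expression for $\Delta\Lambda$ are smooth there. (One technical point to settle is that the positive maximum is attained on the dense set where $\rank d\varphi=4$; I would argue this from almost submersivity, passing to the closure by continuity.) At such a maximum $d\Lambda=0$ and $\Delta\Lambda(p)\leq 0$, and together with the hypothesis $\grad|d\varphi|^{2}\in\VV$ this forces $E_I(\lambda_1^2)=E_I(\lambda_2^2)=0$ for every horizontal $E_I$ as well as $V(\lambda_1^2)=V(\lambda_2^2)$, which I would substitute into the formula.

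Next I would exploit that $F$ is parallel on $\HH$: for $X\in\HH$ this reads $\nabla_X(FY)=F(\nabla_X Y)$, and since $F$ is skew-adjoint with $FV=0$ it follows that $\Gamma_{IJ}^{0}=g(\nabla_{E_I}E_J,V)=0$ for all horizontal $E_I,E_J$; in particular $\Gamma_{i\ov\imath}^{0}=0$ and $\Gamma_{ii}^{0}=V(\lambda_i^2)/2\lambda_i^2=0$, so the eigenvalues are even constant along the fibres. This annihilates the $\Gamma_{1\ov1}^{0},\Gamma_{2\ov2}^{0}$ line and the vertical-derivative line, while Lemma~\ref{lem4}, via \eqref{zero}, collapses to $\tilde\Gamma_{0I}^{J}=\lambda_{\abs{J}}^{2}\Gamma_{0I}^{J}$, turning the quadratic form in $\Gamma_{0I}^{J}$ and $\tilde\Gamma_{0I}^{J}$ into $\Lambda\sum_{\abs{I}=1,\abs{J}=2}\bigl(3-\lambda_2^2/\lambda_1^2\bigr)(\Gamma_{0I}^{J})^{2}\geq 0$. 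For the mean-curvature terms I would evaluate the harmonicity condition~\eqref{phm} at $p$: as the horizontal derivatives of the eigenvalues vanish there, it yields $\lambda_1^2 a_1=-\Lambda(\Gamma_{22}^{1}+\Gamma_{\ov2\ov2}^{1})$ and its three analogues, so each $a_i,b_i$ is $\Lambda$ times a Christoffel symbol and the mean-curvature contributions become of order $\Lambda^2$. What then survives with a manifest sign is the squared-Christoffel line, which is $2\Lambda(\cdots)\geq 0$, and the strictly positive horizontal curvature term $2\Lambda\,(R_{1212}+R_{1\ov2 1\ov2}+R_{\ov1 2\ov1 2}+R_{\ov1\ov2\ov1\ov2})$, positive because the sectional curvature of $M$ is strictly positive.

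The main obstacle I foresee is the mixed vertical--horizontal curvature line $\lambda_1^2(R_{0101}+R_{0\ov1 0\ov1})-\lambda_2^2(R_{0202}+R_{0\ov2 0\ov2})$ together with the residual order-$\Lambda^2$ mean-curvature terms: the curvature line is a priori of indefinite sign, since two different planes are weighted by the unequal factors $\lambda_1^2$ and $\lambda_2^2$. To control it I would re-express the raw curvatures by differentiating the global identity $\Gamma_{ii}^{0}=0$ along $V$, trading $R_{0i0i}$ for Christoffel and mean-curvature terms (again of order $\Lambda$ at $p$), and then assemble the entire right-hand side into $\Lambda$ times a sum of squares plus the strictly positive horizontal curvature, so that $0\geq\Delta\Lambda(p)\geq(\text{positive})\cdot\Lambda(p)$ forces $\Lambda(p)=0$, a contradiction; hence $\Lambda\equiv 0$. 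Finally, in the $(1,2)$-symplectic case I would replace the parallelism of $F$ on $\HH$ by minimality of the fibres: there harmonicity reads $F\Div^{\HH}F+(m-n)\mu^{\VV}=0$, so $\mu^{\VV}=0$ removes all the $a_i,b_i$ terms at once while the $(1,2)$-symplectic structure supplies the simplifications otherwise furnished by $F$-parallelism, and the same curvature estimate concludes; note that $F$ parallel on $\HH$ gives $\Div^{\HH}F=0$ and hence $\mu^{\VV}=0$ in this setting, which is exactly why minimality of the fibres is the weaker sufficient hypothesis.
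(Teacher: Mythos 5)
Your overall strategy is the paper's: apply the maximum principle to $\Lambda$ at a maximum point $p$, use $F$-parallelism on $\HH$ to get $\Gamma_{IJ}^{0}=0$ for horizontal $I,J$, and show the surviving terms of $\Delta\Lambda$ are a positive multiple of $\Lambda$ plus non-negative quantities. Several of your simplifications (the vanishing of the vertical-derivative line, the reduction of the quadratic form to $\bigl(3-\lambda_2^2/\lambda_1^2\bigr)(\Gamma_{0I}^{J})^{2}$ via $\tilde\Gamma_{0I}^{J}=\lambda_{\abs J}^{2}\Gamma_{0I}^{J}$) are correct and even slightly sharper than the paper's. But there are two genuine gaps. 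First, the mean-curvature terms: the paper disposes of them by observing that $F$ parallel on $\HH$ makes $\varphi$ pseudo horizontally homothetic, and a harmonic pseudo horizontally homothetic submersion has minimal fibres, so $a_i=b_i=0$ \emph{identically}. Your substitute --- solving \eqref{phm} at the critical point to get $a_i,b_i=O(\Lambda)$ --- does not close: it produces terms such as $2\lambda_1^2 a_2\Gamma_{11}^{2}=2\Lambda\frac{\lambda_1^2}{\lambda_2^2}(\Gamma_{11}^{2}+\Gamma_{\ov1\ov1}^{2})\Gamma_{11}^{2}$ and $-2\lambda_2^2 a_2^2$, which are of indefinite sign and carry the uncontrolled factor $\lambda_1^2/\lambda_2^2$; the claimed assembly ``into $\Lambda$ times a sum of squares'' is precisely the estimate that the paper's \emph{next} proposition needs a dilatation bound $\ell^2<\tfrac{3+\sqrt5}{2}$ to achieve, so you cannot assert it for free here. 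Second, the mixed curvature line $\lambda_1^2(R_{0101}+R_{0\ov10\ov1})-\lambda_2^2(R_{0202}+R_{0\ov20\ov2})$: the paper expands $R_{0I0I}$ in Christoffel symbols and shows it vanishes \emph{exactly} once all $\Gamma_{IJ}^{0}\equiv0$ on a neighbourhood (every $\Gamma_{I0}^{K}=-\Gamma_{IK}^{0}=0$, so all four terms in the expansion die). Your plan to differentiate $\Gamma_{ii}^{0}=0$ along $V$ is the right idea but is left as a hedge (``of order $\Lambda$''), and without the exact cancellation the indefinite weighting by $\lambda_1^2$ versus $\lambda_2^2$ is not controlled.

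A smaller but real error concerns the $(1,2)$-symplectic variant: the statement keeps the hypothesis that $F$ is parallel on $\HH$ and replaces \emph{harmonicity} by minimality of the fibres (harmonicity is then automatic from $F\Div^{\HH}F=0$ and $\mu^{\VV}=0$). You instead drop the $F$-parallelism and keep harmonicity plus minimal fibres, asserting that the $(1,2)$-symplectic condition on $N$ ``supplies the simplifications otherwise furnished by $F$-parallelism''; it does not --- the $(1,2)$-symplectic condition lives on the target and gives neither $\Gamma_{IJ}^{0}=0$ nor the vanishing of $R_{0I0I}$ on $M$, both of which your argument still needs.
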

\begin{proof}
Let $p$ be a maximum of the function $\Lambda$. Since the condition on $F$ forces $\varphi$ to be pseudo horizontally homothetic, it automatically has minimal fibres and 
$$
\Gamma_{i \ov \imath}^{K}\Gamma_{\ov \imath i}^{K}=-\Gamma_{i i}^{\ov K}\Gamma_{\ov \imath \ov \imath}^{\ov K}.
$$
Moreover, the condition
$$ \nabla_{E_I} FE_{J} = F\nabla_{E_I}E_J , \quad \forall I,J \neq 0,$$
implies that the symbols $\Gamma^{0}_{IJ}$ vanish, hence, if $I=1,\ov 1 , 2$ or $\ov 2$,
\begin{align*}
R_{0I0I} &= \langle R(E_I , V) V, E_I \rangle \\
&= \langle \nabla_{E_I}\nabla_{V}V - \nabla_{V}\nabla_{E_I}V - \nabla_{\nabla_{E_I}V}V + \nabla_{\nabla_{V}E_I}V ,E_I\rangle \\
&= -V(\Gamma^{I}_{I0}) - \Gamma^{K}_{I0}\Gamma^{I}_{0K} - \Gamma^{K}_{I0}\Gamma^{I}_{K0} + \Gamma^{K}_{0I}\Gamma^{I}_{K0}\\
&=0
\end{align*}
On the other hand, as $p$ is a critical point for $\Lambda$, $V(\lambda_{1}^{2})=V(\lambda_{2}^{2})$, so
$$\frac{V(\lambda_{1}^{2})^{2}}{2\lambda_{1}^{2}}-\frac{V(\lambda_{2}^{2})^{2}}{2\lambda_{2}^{2}}
+\Lambda  \frac{V(\lambda_{1}^{2})}{\lambda_{1}^{2}} \frac{V(\lambda_{2}^{2})}{\lambda_{2}^{2}} =
\frac{V(\lambda_{1}^{2})^{2}}{2}\left[\frac{1}{\lambda_{2}^{2}}-\frac{1}{\lambda_{1}^{2}}\right] .$$
As to the terms in $\Gamma_{IJ}^{0}$ and $\Gamma_{0I}^{J}$, it is easy to see that they must vanish.
Therefore the expression for $\Delta \Lambda$ at $p$ becomes
\begin{align*} 
\Delta \Lambda=& 4\Lambda \sum_{|I| \neq |K| \neq 0}(\Gamma_{II}^{K})^{2} 
+2\Lambda \left(R_{1212}+R_{1 \ov 2 1 \ov 2}+R_{\ov 1 2 \ov 1 2}+R_{\ov 1 \ov 2 \ov 1 \ov 2} \right)\\
&+\frac{V(\lambda_{1}^{2})^{2}}{2}\left[\frac{1}{\lambda_{2}^{2}}-\frac{1}{\lambda_{1}^{2}}\right] .
\end{align*}
Given our sign convention for the Laplacian, at a maximum point this expression should be negative, hence $\Lambda \equiv 0$.
\end{proof}

\begin{proposition}
Let $\varphi: (M^{5},g) \to (N^{4}, J, h)$ be an almost submersive pseudo harmonic morphism from a compact five-dimensional Riemannian manifold of constant strictly positive sectional curvature into a 4-dimensional almost Hermitian manifold such that $\grad |d\varphi|^2 \in \VV$. 
If the dilatation of $\varphi$, $\ell ^{2}=\frac{\lambda_{1}^{2}}{\lambda_{2}^{2}}$, is less than $\frac{3 + \sqrt 5}{2}$ and
the $f$-structure $F$ induced by $\varphi$ on $M$ satisfies
$$\left(\nabla_X F\right)X=0, \quad \forall X,$$
then $\varphi$ is a harmonic morphism.\\
If $N$ is $(1,2)$-symplectic, we can drop the condition of harmonicity.
\end{proposition}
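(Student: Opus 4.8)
The plan is to run the maximum-principle argument of the preceding proposition, now made to work under the weaker hypothesis $(\nabla_X F)X=0$ at the cost of \emph{constant} positive curvature and a dilatation bound. Since $M$ is compact, $\Lambda=\lambda_1^2-\lambda_2^2\ge 0$ attains a maximum at some point $p$; working, as in the Theorem, in a neighbourhood of a regular point, we have $d\Lambda(p)=0$ and, with the sign convention used, $\Delta\Lambda(p)\le 0$. The entire proof then reduces to checking that, under the hypotheses, every block on the right-hand side of the $\Delta\Lambda$ formula is non-negative; the curvature block being a positive multiple of $\Lambda(p)$, this forces $\Lambda(p)=0$, hence $\Lambda\equiv 0$, so $\lambda_1^2=\lambda_2^2$ and the (already harmonic) map is horizontally weakly conformal, i.e. a harmonic morphism.

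First I would extract the algebraic content of $(\nabla_X F)X=0$. Taking $X=V$ gives $F(\nabla_V V)=0$; since $\nabla_V V\perp V$ and $FV=0$ this means $\nabla_V V=0$, so $\mu^{\VV}=0$ and $a_1=a_2=b_1=b_2=0$ identically. This kills the whole block of terms carrying $a_i,b_i$, including the derivative terms $FE_1(b_1),\,FE_2(b_2)$, as the $b_i$ vanish on a neighbourhood. Taking $X=E_i$ gives $(\nabla_{E_i}F)E_i=0$, whose $V$-component reads $\Gamma_{i\ov\imath}^0=0$ (using $FV=0$), so the line $-2\lambda_1^2(\Gamma_{1\ov1}^0)^2+2\lambda_2^2(\Gamma_{2\ov2}^0)^2$ drops out; its $E_k$- and $E_{\ov k}$-components give $\Gamma_{i\ov\imath}^k=-\Gamma_{ii}^{\ov k}$ and $\Gamma_{i\ov\imath}^{\ov k}=\Gamma_{ii}^k$, and the same computation at $E_{\ov\imath}$ yields $\Gamma_{\ov\imath i}^k=\Gamma_{\ov\imath\ov\imath}^{\ov k}$ and $\Gamma_{\ov\imath i}^{\ov k}=-\Gamma_{\ov\imath\ov\imath}^k$. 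Multiplying these gives precisely the identity $\Gamma_{i\ov\imath}^K\Gamma_{\ov\imath i}^K=-\Gamma_{ii}^{\ov K}\Gamma_{\ov\imath\ov\imath}^{\ov K}$ on which the previous proposition rested; hence the first line collapses, exactly as there, to the manifestly non-negative $4\Lambda\sum_{|I|\neq|K|\neq 0}(\Gamma_{II}^K)^2$.

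Next come the three remaining blocks. Being a critical point, $V(\lambda_1^2)=V(\lambda_2^2)$, so the vertical contribution reduces to $\tfrac12 V(\lambda_1^2)^2(\lambda_2^{-2}-\lambda_1^{-2})\ge 0$. Constant curvature $K>0$ lets me evaluate every sectional term as $R_{IJIJ}=K$, turning the two curvature lines into $8K\Lambda+2K(\lambda_1^2-\lambda_2^2)=10K\Lambda\ge 0$, the decisive positive multiple of $\Lambda$. The one genuinely new feature is the block that the stronger hypothesis of the previous proposition simply annihilated: here it survives as $\Lambda$ times $\sum_{|I|=1,|J|=2}\big[5(\Gamma_{0I}^J)^2-2(\lambda_1^{-2}+\lambda_2^{-2})\Gamma_{0I}^J\tilde\Gamma_{0I}^J+\lambda_1^{-2}\lambda_2^{-2}(\tilde\Gamma_{0I}^J)^2\big]$. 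Reading this as a binary quadratic form in $(\Gamma_{0I}^J,\tilde\Gamma_{0I}^J)$ and computing its discriminant, positive semi-definiteness is equivalent to $3\lambda_1^2\lambda_2^2-\lambda_1^4-\lambda_2^4\ge 0$, that is $\ell^4-3\ell^2+1\le 0$, i.e. $\ell^2\le\tfrac{3+\sqrt5}{2}$; this is exactly the stated bound and the only place it is used.

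Collecting the four non-negative blocks gives $0\ge\Delta\Lambda(p)\ge 10K\Lambda(p)$, whence $\Lambda(p)=0$ and, $p$ being the maximum, $\Lambda\equiv 0$. The main obstacle is the second step: showing that the weaker condition $(\nabla_X F)X=0$ still produces the quadratic Christoffel identity, so that the intrinsic block keeps its clean form; once that holds the rest is bookkeeping plus the two sign inputs $K>0$ and the dilatation bound. For the last sentence, when $N$ is $(1,2)$-symplectic I would not assume harmonicity at all: from $(\nabla_X F)X=0$ one reads off $\Div F=\sum_A(\nabla_{E_A}F)E_A=0$, so $F\Div F=0$, which by the $(1,2)$-symplectic criterion recalled earlier is exactly the harmonicity of the pseudo horizontally weakly conformal map; thus $\varphi$ is automatically a pseudo harmonic morphism and the argument above applies verbatim.
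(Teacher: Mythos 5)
Your proof is correct and follows essentially the same route as the paper's (much terser) argument: extracting $\mu^{\VV}=0$, $\Gamma_{i \ov \imath}^{0}=0$ and the quadratic Christoffel identity from $(\nabla_X F)X=0$, then checking positive semi-definiteness of the quadratic form in $(\Gamma_{0I}^{J},\tilde \Gamma_{0I}^{J})$ via its discriminant, which is exactly where the bound $\ell^{2}\leq \tfrac{3+\sqrt 5}{2}$ arises. You in fact supply details the paper leaves implicit, notably the constant-curvature evaluation of the $R_{0I0I}$ block (the reason constancy, not just positivity, is needed here) and the observation that $(\nabla_X F)X=0$ gives $\Div F=0$, hence harmonicity for free when $N$ is $(1,2)$-symplectic.
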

\begin{proof}
The hypothesis on $F$ implies that $\Gamma_{1 \ov 1}^{0}=\Gamma_{2 \ov 2}^{0}=0$ and 
$$
\Gamma_{i \ov \imath}^{K}\Gamma_{\ov \imath i}^{K}=-\Gamma_{i i}^{\ov K}\Gamma_{\ov \imath \ov \imath}^{\ov K}.
$$
Moreover, it forces the fibres to be minimal.\\
The condition on the dilatation ensures that the polynomial in $\Gamma_{0I}^{J}$ and $\tilde \Gamma_{0I}^{J}$ is positive.
\end{proof}

When the domain is orientable, we can introduce contact geometry (cf.~\cite{Blair}).
\begin{theorem}
Let $(M^5,\phi, \xi, \eta, g)$ be a compact nearly cosymplectic almost contact metric manifold and $\varphi: M \to (N^4,J,h)$ a $(\phi, J)$-holomorphic almost submersion into a $(1,2)$-symplectic four-manifold. If the domain $M$ has constant strictly positive sectional curvature and the map $\varphi$ has $\grad |d\varphi|^2 \in \VV$, then $\varphi$ is horizontally weakly conformal and therefore a harmonic morphism.
\end{theorem}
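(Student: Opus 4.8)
The plan is to place $\varphi$ squarely in the setting of the two preceding propositions and then to use the \emph{full} strength of the nearly cosymplectic condition to dispose of the dilatation hypothesis. First I would record that, $\varphi$ being an almost submersion $M^{5}\to N^{4}$, the vertical distribution $\VV=\ker d\varphi$ is one-dimensional; since $d\varphi(\phi\xi)=Jd\varphi(\xi)$ and $\phi\xi=0$ force $d\varphi(\xi)=0$, we get $\VV=\Span\{\xi\}$ and $V=\xi$. As $\phi$ is an $f$-structure with $\ker\phi=\Span\{\xi\}=\VV$ for which $\varphi$ is $(\phi,J)$-holomorphic, it must coincide with the $f$-structure $F$ induced by $\varphi$, so $F=\phi$. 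The nearly cosymplectic identity $(\nabla_{X}\phi)Y+(\nabla_{Y}\phi)X=0$ then reads $(\nabla_{X}F)X=0$: as in the previous propositions this makes $\varphi$ pseudo horizontally homothetic and, taking $X=Y=\xi$, gives $\nabla_{\xi}\xi=0$, so the fibres are minimal, $\mu^{\VV}=0$ (hence $a_{i}=b_{i}=0$ identically). Tracing $(\nabla_{E_{A}}F)E_{A}=0$ yields $\Div F=0$, so by the $(1,2)$-symplectic criterion $F\Div F=0$ the map is harmonic; thus $\varphi$ is a pseudo harmonic morphism and the formula for $\Delta\Lambda$ applies, together with $\Gamma_{1\ov 1}^{0}=\Gamma_{2\ov 2}^{0}=0$ and $\Gamma_{i\ov\imath}^{K}\Gamma_{\ov\imath i}^{K}=-\Gamma_{ii}^{\ov K}\Gamma_{\ov\imath\ov\imath}^{\ov K}$.

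The decisive extra input is the second consequence of nearly cosymplecticity: $\xi$ is a Killing field (cf.~\cite{Blair}), so $\nabla\xi$ is skew-symmetric. Two things follow. On the one hand $\Gamma_{ii}^{0}=g(\nabla_{E_{i}}E_{i},\xi)=-g(E_{i},\nabla_{E_{i}}\xi)=0$, whence by \eqref{nei} $V(\lambda_{i}^{2})=0$ and the vertical block $\tfrac{V(\lambda_{1}^{2})^{2}}{2\lambda_{1}^{2}}-\tfrac{V(\lambda_{2}^{2})^{2}}{2\lambda_{2}^{2}}+\Lambda\tfrac{V(\lambda_{1}^{2})}{\lambda_{1}^{2}}\tfrac{V(\lambda_{2}^{2})}{\lambda_{2}^{2}}$ of $\Delta\Lambda$ vanishes. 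On the other hand $\Gamma_{IJ}^{0}=-g(E_{J},\nabla_{E_{I}}\xi)$ is skew in $I,J$, i.e. $\Gamma_{IJ}^{0}+\Gamma_{JI}^{0}=0$. Substituting this into the second relation of Lemma~\ref{lem4} gives, for $\abs I=1,\abs J=2$ (where $\lambda_{1}^{2}\neq\lambda_{2}^{2}$ near a point with $\Lambda>0$), the equality $\Gamma_{0I}^{J}=\Gamma_{JI}^{0}=-\Gamma_{IJ}^{0}$, and then \eqref{zero} forces $\tilde\Gamma_{0I}^{J}=0$. This is exactly the step that replaces the bound on the dilatation: the indefinite quadratic block $5(\Gamma_{0I}^{J})^{2}-2(\tfrac{1}{\lambda_{1}^{2}}+\tfrac{1}{\lambda_{2}^{2}})\Gamma_{0I}^{J}\tilde\Gamma_{0I}^{J}+\tfrac{1}{\lambda_{1}^{2}\lambda_{2}^{2}}(\tilde\Gamma_{0I}^{J})^{2}$ collapses to $5(\Gamma_{0I}^{J})^{2}\geq0$, with no ratio condition on the eigenvalues.

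It remains to treat the curvature. Constant sectional curvature $c>0$ gives $R_{IJKL}=c(g_{IK}g_{JL}-g_{IL}g_{JK})$, so $R_{1212}+R_{1\ov 2 1\ov 2}+R_{\ov 1 2\ov 1 2}+R_{\ov 1\ov 2\ov 1\ov 2}=4c$ and $R_{0101}+R_{0\ov 1 0\ov 1}=R_{0202}+R_{0\ov 2 0\ov 2}=2c$, so the curvature contribution to $\Delta\Lambda$ is $8c\Lambda+2c(\lambda_{1}^{2}-\lambda_{2}^{2})=10c\,\Lambda$. Since $a_{i}=b_{i}=0$ kills all the remaining lines and the first bracket reduces, via the pseudo horizontally homothetic relation as in the preceding proposition, to $4\Lambda\sum_{\abs I\neq\abs K\neq0}(\Gamma_{II}^{K})^{2}$, at a maximum $p$ of $\Lambda$ one is left with
\begin{equation*}
\Delta\Lambda=4\Lambda\!\!\sum_{\abs I\neq\abs K\neq0}\!\!(\Gamma_{II}^{K})^{2}+5\Lambda\!\!\sum_{\abs I=1,\,\abs J=2}\!\!(\Gamma_{0I}^{J})^{2}+10c\,\Lambda\;\geq\;0,
\end{equation*}
the last term being strictly positive as soon as $\Lambda(p)>0$. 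As the sign convention makes $\Delta\Lambda\leq0$ at a maximum, we must have $\Lambda(p)=0$, hence $\Lambda\equiv0$; thus $\lambda_{1}^{2}=\lambda_{2}^{2}$, $\varphi$ is horizontally weakly conformal and, being a harmonic pseudo horizontally weakly conformal map, a harmonic morphism.

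The main obstacle is the second paragraph: showing that the Killing property of $\xi$, fed through Lemma~\ref{lem4} and \eqref{zero}, genuinely forces $\tilde\Gamma_{0I}^{J}=0$ and so annihilates the cross and $(\tilde\Gamma_{0I}^{J})^{2}$ terms. It is precisely this vanishing that removes the quadratic form's indefiniteness and lets the constant-curvature term $10c\,\Lambda$ dominate without any hypothesis on the dilatation; one should also check carefully that $\lambda_{1}^{2}\neq\lambda_{2}^{2}$ in a neighbourhood of a maximum with $\Lambda(p)>0$, which is what licenses dividing by $\lambda_{\abs I}^{2}-\lambda_{\abs J}^{2}$ in that step.
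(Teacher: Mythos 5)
Your proposal is correct and follows essentially the same route as the paper: reduce to the preceding proposition via the nearly cosymplectic identity $(\nabla_{X}\phi)X=0$, then use the Killing property of $\xi$, fed through Lemma~\ref{lem4} and \eqref{zero}, to force $\tilde\Gamma_{0I}^{J}=0$, so that the quadratic block collapses to $5\Lambda\sum(\Gamma_{0I}^{J})^{2}\geq 0$ and the dilatation hypothesis becomes unnecessary. The additional details you supply (the identifications $F=\phi$ and $V=\xi$, harmonicity from $F\Div F=0$, the vanishing $V(\lambda_{i}^{2})=0$, and the explicit constant-curvature arithmetic giving the $10c\,\Lambda$ term) are all consistent with, and implicit in, the paper's terser argument.
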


\begin{proof}
According to \cite{Blair}, the condition of nearly symplectic is
$$(\nabla_{X}\phi)X=0, \forall X,$$
so, as in the previous Proposition, we only have to control the sign of the polynomial in $\Gamma_{0I}^{J}$ and $\tilde \Gamma_{0I}^{J}$.
From Lemma~\ref{lem4}, we deduce that 
\begin{align*}
\tilde \Gamma_{0I}^{J} =& h \left(\nabla_{V}^{\varphi}d \varphi (E_{I}),  d \varphi (E_{J}) \right)\\
&=\lambda_{\abs J}^{2} (\Gamma_{IJ}^{0}+\Gamma_{0I}^{J})\\
&=\lambda_{\abs J}^{2} \left(\Gamma_{IJ}^{0}+\frac{\lambda_{\abs I}^{2} \Gamma_{JI}^{0} + \lambda_{\abs J}^{2} \Gamma_{IJ}^{0}}{\lambda_{\abs I}^{2}-\lambda_{\abs J}^{2}}\right)\\
&=\frac{\lambda_{\abs I}^{2}\lambda_{\abs J}^{2} \left(\Gamma_{IJ}^{0}+ \Gamma_{JI}^{0}\right)}{\lambda_{\abs I}^{2}-\lambda_{\abs J}^{2}}.
\end{align*}
So if $\VV$ is a conformal foliation (equivalently $\HH$ is umbilic, cf. \cite[Proposition 2.5.8]{BW}), then $\tilde \Gamma_{0I}^{J}=0, \forall I \neq J$.
But on a nearly cosymplectic manifold $\xi$ is a Killing vector field \cite[Proposition 6.1]{Blair}, so in this case $\tilde \Gamma_{0I}^{J}=0, \forall I \neq J$ and
the polynomial in $\Gamma_{0I}^{J}$ and $\tilde \Gamma_{0I}^{J}$ must be positive.
\end{proof}

If we regard $\mathbb{S}^5$ as a totally geodesic hypersurface of $\mathbb{S}^6$ with its nearly K\"ahler structure, then the induced almost contact structure on $\mathbb{S}^5$ is nearly cosymplectic \cite[Example 4.5.3]{Blair}. Therefore

\begin{corollary}
A $(\phi, J)$-holomorphic almost submersion from $(\mathbb{S}^5,\phi, \xi, \eta, g)$ into a $(1,2)$-symplectic four-manifold with $\grad |d\varphi|^2 \in \VV$, must be horizontally weakly conformal and therefore a harmonic morphism.
\end{corollary}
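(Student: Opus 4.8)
The plan is to recognize the corollary as a direct specialization of the preceding Theorem to the sphere $\SS^5$ equipped with its canonical nearly cosymplectic almost contact metric structure, and to verify that each hypothesis of that Theorem is met by this particular domain. No new computation is needed; the work consists entirely in checking that $\SS^5$ falls within the scope of the Theorem.

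First I would enumerate the conditions the Theorem requires: that the domain $(M^5, \phi, \xi, \eta, g)$ be a compact nearly cosymplectic almost contact metric manifold of constant strictly positive sectional curvature, that $\varphi : M \to (N^4, J, h)$ be a $(\phi, J)$-holomorphic almost submersion into a $(1,2)$-symplectic four-manifold, and that $\grad \abs{d\varphi}^2 \in \VV$. I would then verify these for $M = \SS^5$. Compactness is immediate. The round metric on $\SS^5$ has constant sectional curvature equal to $1$, which is both constant and strictly positive, so the curvature requirement holds. The remaining hypotheses on the map itself—$(\phi, J)$-holomorphy, the almost submersion property, the $(1,2)$-symplectic condition on the target $N^4$, and $\grad \abs{d\varphi}^2 \in \VV$—are assumed verbatim in the statement of the corollary.

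The only structural input that is not entirely formal is the identification of the almost contact metric structure as nearly cosymplectic. This is precisely the content of the remark immediately preceding the statement: regarding $\SS^5$ as a totally geodesic hypersurface of the nearly K\"ahler $\SS^6$, the induced almost contact metric structure is nearly cosymplectic by Blair's example \cite[Example 4.5.3]{Blair}. With this identification in hand, all hypotheses of the Theorem are satisfied, and I would invoke it directly to conclude that $\varphi$ is horizontally weakly conformal, and hence a harmonic morphism.

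I do not anticipate any genuine obstacle: the argument is a verification that the canonical structure on $\SS^5$ satisfies the premises of the Theorem. The single point needing external justification is the nearly cosymplectic nature of the induced structure, which is supplied by the cited result of Blair; everything else follows from standard properties of the round sphere together with the hypotheses stated in the corollary.
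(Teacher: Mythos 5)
Your proposal is correct and matches the paper's own (implicit) argument exactly: the paper derives the corollary by noting, in the sentence immediately preceding it, that $\SS^5$ as a totally geodesic hypersurface of the nearly K\"ahler $\SS^6$ carries a nearly cosymplectic almost contact metric structure (citing Blair, Example 4.5.3), and then applies the preceding theorem, whose remaining hypotheses (compactness, constant strictly positive curvature, and the assumptions on $\varphi$) are satisfied for the round sphere or assumed in the statement. Nothing further is needed.
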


\end{document}